\documentclass[10pt]{article}

\usepackage{latexsym, amscd, amsfonts, eucal, mathrsfs, amsmath, amssymb, amsthm, xr, makeidx, stmaryrd, bigints}
\usepackage{tikz-cd}
\usepackage{bookmark}
\usepackage[a4paper, left=20mm, right=20mm, top=25mm, bottom=20mm]{geometry}
\usepackage{indentfirst}
\usepackage{enumitem}
\usepackage{enumitem}
\usepackage{verbatim}
\usepackage{upgreek}
\usepackage{textcomp}
\usepackage{dsfont}
\usepackage{mathtools}
\usepackage{commutative-diagrams}
\usepackage{hyperref, cleveref}
\usepackage{mdframed}
\hypersetup{colorlinks=true,linkcolor=orange,citecolor=orange,urlcolor=orange}

\newcommand{\X}{\mathbb{X}}

\newcommand{\Hom}{\operatorname{Hom}}

\newcommand{\Z}{\mathbb{Z}}
\newcommand{\Q}{\mathbb{Q}}

\newcommand{\Gal}{\operatorname{Gal}}

\newcommand{\C}{\mathbb{C}}
\newcommand{\cont}{\mathrm{Cont}}

\newcommand{\Ext}{{\mathcal{E}xt}}

\newcommand{\coker}{\operatorname{coker}}
\renewcommand{\cont}{\mathrm{cont}}

\newcommand{\id}{\mathrm{id}}

\newcommand{\fib}{\operatorname{fib}}
\newcommand{\cofib}{\operatorname{cofib}}
\newcommand{\Stab}{\operatorname{Stab}}
\newcommand{\nequiv}{\not\equiv}

\newcommand*{\modcat}[1][]{{\mathrm{Mod}_{\if\relax\detokenize{#1}\relax\else#1,\fi p}^{\mathrm{disc}}}}

\newcommand*{\dmodcat}[1][]{{D^+\!\bigl(\mathrm{Mod}_{\if\relax\detokenize{#1}\relax\else#1,\fi p}^{\mathrm{disc}}\bigr)}}

\newcommand*{\chaincat}[1][]{{\operatorname{Ch}^+\!\bigl(\mathrm{Mod}_{\if\relax\detokenize{#1}\relax\else#1,\fi p}^{\mathrm{disc}}\bigr)}}

\theoremstyle{plain}
\newtheorem{thm}{Theorem}[section]
\newtheorem{lem}[thm]{Lemma}
\newtheorem{prop}[thm]{Proposition}
\newtheorem{cor}[thm]{Corollary}

\theoremstyle{definition}
\newtheorem{defn}[thm]{Definition}

\theoremstyle{remark}
\newtheorem{rmk}[thm]{Remark}

\begin{document}
\title{\textbf{On the cohomology of negative Tate twists\\via cyclotomic descent}}
\author{Taewan Kim, Seunghun Ryu}
\maketitle
\begin{abstract}
We show that the Galois cohomology of negative Tate twists can be organized by a single universal cyclotomic complex over the cyclotomic tower of $\mathbb{Q}$. Using cyclotomic descent and Teichm\"uller branch decomposition, we prove that a negative twist contributes only on the corresponding branch and is recovered by specializing the Iwasawa variable at a single point; equivalently, it is computed as the fiber of $\gamma-u^{-m}$, or $T=u^{-m}-1$ in Iwasawa coordinates. In the case $\mathbb{Q}_p/\mathbb{Z}_p$, this gives explicit descriptions of $H^1$ and $H^2$ in terms of the quotient and torsion of the $S$-ramified Iwasawa module.
\end{abstract}

\tableofcontents
\bigskip

\section{Notation and Background}
Throughout the paper we adopt the following conventions unless stated otherwise.

\begin{itemize}
\setlength\itemsep{0.1em}
    \item $F=\Q$, $F_n:=\Q(\mu_{p^n})$ and $F_\infty:=\Q(\mu_{p^\infty})=\bigcup_{n=1}^\infty \Q(\mu_{p^n})$.
    \item $p$ : an arbitrary \emph{odd} prime.
    \item $S=\{p,\infty\}$ : a finite set of places of $\Q$.
    \item $F_S=\Q_S$ : the maximal extension of $\Q$ unramified outside of $S$.
    \item $G:=G(F_S/F)=\Gal(\Q_S/\Q)$ and $H:=G(F_S/F_\infty)$.
    \item $\Gamma:=G(F_\infty/F)\simeq \Z_p^\times$, $\Gamma_1=G(F_\infty/F_1)$ and $\Delta:=G(F_1/F)$.
    \item $\Lambda:=\Lambda(\Gamma_1)=\Z_p[[\Gamma_1]]=\varprojlim_U \Z_p[\Gamma_1/U]$ : the Iwasawa algebra.
\end{itemize}
There is a short exact sequence
\[
1\longrightarrow H\longrightarrow G\longrightarrow \Gamma\longrightarrow 1.
\]
Let
\[
\chi_{\mathrm{cyc}}:G\longrightarrow \Z_p^\times
\]
be the cyclotomic character. Then $H=\ker(\chi_{\mathrm{cyc}})$, and the induced character
\[
\chi_\Gamma:\Gamma\xrightarrow{\sim}\Z_p^\times
\]
is the tautological character. Since $p$ is odd, one has $\Z_p^\times\simeq \mu_{p-1}\times (1+p\Z_p)$. We think of $\mu_{p-1}$ and $1+pZ_p$ as subsets of $\Z_p^\times$. $\Delta$ and $\Gamma_1$ are the image of $\mu_{p-1}$ and $1+p\Z_p$ in $\Gamma$, respectively; so we have
\[
\Gamma= \Delta\times \Gamma_1,
\]
\[
\Delta\cong \mu_{p-1},
\qquad
\Gamma_1\cong 1+p\Z_p\cong \Z_p.
\]
Let
\[
\omega:\Z_p^\times\longrightarrow \mu_{p-1}\subset \Z_p^\times,
\]
\[
\langle-\rangle:\Z_p^\times\longrightarrow 1+pZ_p\subset \Z_p^\times
\]
where $\omega$ is the Teichm\"uller character of the reduction modulo $p$ and $\langle x\rangle=\omega^{-1}(x)\chi_\Gamma(x)$ is the pro-$p$ part.
Also define
\[
\omega_\Gamma:=\omega\circ\chi_\Gamma:\Gamma\longrightarrow \mu_{p-1}\subset \Z_p^\times,
\]
\[
\langle\cdot\rangle_\Gamma:=\langle\cdot\rangle\circ\chi_\Gamma:\Gamma\longrightarrow 1+pZ_p\subset \Z_p^\times.
\]
We can view $\omega_\Gamma$ as a $\Z_p^\times$-valued character.
Denote $\omega_\Delta:=\omega_\Gamma|_\Delta:\Delta\to\mu_{p-1}$.
Under these notations, we can write $\chi_\Gamma=\omega_\Gamma\cdot \langle-\rangle_\Gamma$ and $\chi_\Gamma|_\Delta=\omega_\Delta$.
Fix a topological generator $\gamma$ of $\Gamma_1$, and define $u:=\langle\gamma\rangle_\Gamma\in 1+p\Z_p$.
Since $\omega_\Gamma(\gamma)=1$, we have $\chi_\Gamma(\gamma)=\omega_\Gamma(\gamma)\cdot\langle\gamma\rangle_\Gamma=u$.
Note that the homomorphisms $\Delta\cong \mu_{p-1}\to\Z_p^\times$ are given precisely by the $\omega_\Delta^i$, where $i\in \Z/(p-1)\Z$. Thus for any character $\vartheta:\Gamma\to\Z_p^\times$, there is an integer $i\in\Z/(p-1)\Z$ such that $\vartheta|_\Delta=\omega_\Delta^i$.

\begin{defn}
For a profinite group $K$, denote by $\modcat[K]$ the abelian category of discrete $p$-primary abelian groups with continuous $K$-action. This is a Grothendieck abelian category with enough injectives. Let us denote by $\dmodcat[K]$ its bounded-below derived $\infty$-category. Concretely, one may realize it as the differential graded nerve of the dg category of bounded-below complexes of injective objects. This is a stable $\infty$-category. Its homotopy category is the usual bounded-below derived category. When $K=1$, write $\modcat[]$ for the abelian category of discrete $p$-primary abelian groups. 
\end{defn}

Let $M$ be a discrete $G$-module. We denote $\Stab_M(x)\subset G$ by the stabilizer of $x\in M$ with respect to the $G$-action of $M$.
Note that, every $p$-primary abelian group carries a natural $\Z_p$-module structure, so multiplication by any $p$-adic unit is well-defined.

\medskip

If $K'\triangleleft K$ is a closed normal subgroup and $\overline K=K/K'$, then the functor of $K'$-invariants
\[
(-)^{K'}:\modcat[K]
\longrightarrow
\modcat[\overline K]
\]
is left exact. Its right derived functor is denoted
\[
R\Gamma(K',-):
\dmodcat[K]
\longrightarrow
\dmodcat[\overline K].
\]
For $N\in \modcat[K]$,
\[
R\Gamma(K',N)\in \dmodcat[\overline K]
\]
denotes the derived object attached to $N$ placed in degree $0$. In particular, when $K'=K$, one has
\[
R\Gamma(K,N)\in \dmodcat[],
\]
and
\[
H^q(K,N):=H^q\bigl(R\Gamma(K,N)\bigr)
\]
is the group cohomology of $K$ with coefficients in $N$. 
Note that $H^{-q}(K, N)=0$ for $q>0$, because we work on bounded below complexes.

\medskip

The diagram below
\begin{center}
\begin{tikzcd}[row sep=1.5em, column sep=4em]
    & {\modcat[\Gamma]} \arrow[rd, "{(-)^\Gamma}"] & \\
    {\modcat[G]} \arrow[rr, "{(-)^G}"'] \arrow[ru, "{(-)^H}"] & & {\modcat[]}
\end{tikzcd}
\end{center}
yields a diagram
\begin{center}
\begin{tikzcd}[row sep=1.5em, column sep=4em]
    & {\dmodcat[\Gamma]} \arrow[rd, "{R\Gamma(\Gamma, -)}"] & \\
    {\dmodcat[G]} \arrow[rr, "{R\Gamma(G, -)}"'] \arrow[ru, "{R\Gamma(H, -)}"] & & {\dmodcat[]}
\end{tikzcd}
\end{center}
The derived $\infty$-categories and the right derived functors displayed above are used frequently in this paper.
On the other hand, since the action-forgetful functors are exact, we have
\begin{center}
\begin{tikzcd}[row sep=1.5em, column sep=4em]
    & {\dmodcat[\Gamma]} \arrow[rd, "{\operatorname{Forget}}"] & \\
    {\dmodcat[G]} \arrow[rr, "{\operatorname{Forget}}"'] \arrow[ru, "{\operatorname{Forget}}"] & & {\dmodcat[]}
\end{tikzcd}
\end{center}

All fibers and cofibers are formed in these stable $\infty$-categories. (see \cite[Definition 1.1.1.6]{ha})
For any morphism $f:X\to Y$ in a stable $\infty$-category, $\fib(f)$ is an object which makes the diagram below to be pullback:
\[
\begin{CD}
    {\fib(f)} @>>> X\\
    @VVV @VVfV\\
    0 @>>> Y
\end{CD}
\]
and $\cofib(f)$ is an object that makes the diagram below to be pushout:
\[
\begin{CD}
    X @>f>> Y\\
    @VVV @VVV\\
    0 @>>> {\cofib(f)}
\end{CD}
\]
One has a canonical equivalence (cf.\ proof of \cite[Lemma 1.1.3.3]{ha})
\[
\fib(f)\simeq \cofib(f)[-1],
\]
where $[-1]$ means applying loop functor $\Omega$.

\begin{rmk}[cf.\ {\cite[Proposition 5.2.4 (ii)]{nsw}}]
For every profinite group $K$, the category $\modcat[K]$ is Grothendieck abelian and has enough injectives; for discrete $K$-modules, continuous cohomology agrees with the right derived functors of invariants; and every exact endofunctor of such an abelian category extends degreewise to bounded-below complexes and preserves quasi-isomorphisms. If, in addition, it preserves injectives, then it is represented degreewise on the injective model and therefore induces an exact endofunctor on the bounded-below derived $\infty$-category. In particular, every exact autoequivalence preserves injectives, because it has an exact quasi-inverse and hence is both left and right adjoint to an exact functor.
\end{rmk}

\begin{prop}\label{prop: inflation_adjunction}
Let $1\to K'\to K\to \overline K\to 1$ be a short exact sequence of profinite groups and assume that $K'$ be a closed normal subgroup of $K$. Then the inflation functor
\[
\operatorname{Inf}_{\overline K}^{K}:
\modcat[\overline K]
\longrightarrow
\modcat[K]
\]
obtained by pulling back the action along $K\twoheadrightarrow \overline K$ is exact and left adjoint to
\[
(-)^{K'}:
\modcat[K]
\longrightarrow
\modcat[\overline K].
\]
In particular, the functor $(-)^{K'}$ preserves injectives.
\end{prop}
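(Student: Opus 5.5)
The plan has three steps: check that inflation lands in $\modcat[K]$ and is exact, establish the adjunction by a direct bijection of $\Hom$-sets, and then deduce preservation of injectives from the general fact that a right adjoint of an exact functor preserves injectives.

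\emph{Inflation is well defined and exact.} Let $A\in\modcat[\overline K]$ and write $\pi:K\twoheadrightarrow\overline K$. For $a\in A$ the stabilizer $\Stab_A(a)$ is open in $\overline K$. Hence $\pi^{-1}(\Stab_A(a))$ is open in $K$, because $\pi$ is continuous. So $\operatorname{Inf}_{\overline K}^K A$ is again a discrete $p$-primary $K$-module. On morphisms, inflation does nothing: the underlying abelian groups and maps are unchanged. Kernels and cokernels in both categories are computed on underlying abelian groups, so inflation is exact.

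Similarly, $(-)^{K'}$ is well defined. For $M\in\modcat[K]$, the group $M^{K'}$ is stable under $K$ because $K'$ is normal. The induced $\overline K$-action is discrete, because the stabilizer of $x\in M^{K'}$ is the image of the open subgroup $\Stab_M(x)$, and the quotient map of profinite groups is open.

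\emph{The adjunction.} Let $A\in\modcat[\overline K]$ and $M\in\modcat[K]$. I will show that restriction of codomain gives a bijection
\[
\Hom_K(\operatorname{Inf}A,M)\xrightarrow{\ \sim\ }\Hom_{\overline K}(A,M^{K'}),
\]
natural in both $A$ and $M$. Let $f$ be $K$-equivariant and let $k'\in K'$. Since $K'$ acts trivially on $\operatorname{Inf}A$, we get $k'f(a)=f(k'a)=f(a)$. So $f$ lands in $M^{K'}$, and it is $\overline K$-equivariant for the induced action.

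For the inverse map, compose $g:A\to M^{K'}$ with the inclusion $M^{K'}\hookrightarrow M$. This composite is $K$-equivariant, because $k$ acts on $M^{K'}$ through $\pi(k)$. The two assignments are inverse to each other, and naturality is clear.

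\emph{Preservation of injectives.} Let $I\in\modcat[K]$ be injective. Then
\[
\Hom_{\overline K}(-,I^{K'})\cong\Hom_K(\operatorname{Inf}(-),I).
\]
This is a composite of an exact functor with an exact contravariant functor, so it is exact. Hence $I^{K'}$ is injective in $\modcat[\overline K]$.

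I expect no real obstacle. The only point needing care is the discreteness of the $\overline K$-action on $M^{K'}$, which relies on the quotient map of profinite groups being open.
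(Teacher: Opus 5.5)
Your proposal is correct and follows the same route as the paper: inflation is exact because it leaves underlying groups unchanged, the $\Hom$-bijection comes from the image of a $K$-equivariant map landing in $M^{K'}$, and injectives are preserved because $(-)^{K'}$ is right adjoint to an exact functor. Beyond the paper, you check that both functors are well defined, which needs the openness of stabilizers and of $K\to\overline K$. You also spell out the injectivity step via $\Hom_{\overline K}(-,I^{K'})\cong\Hom_K(\operatorname{Inf}(-),I)$, which the paper leaves implicit.
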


\begin{proof}
Inflation is exact because it does not change the underlying abelian group. Let $N\in \modcat[\overline K]$ and $M\in \modcat[K]$. A $K$-equivariant map
\[
f:\operatorname{Inf}_{\overline K}^{K}(N)\longrightarrow M
\]
has image contained in $M^{K'}$, because $K'$ acts trivially on the source. Conversely, every $\overline K$-equivariant map $N\longrightarrow M^{K'}$ is $K$-equivariant after inflation. Hence there is a natural bijection
\[
\Hom_{K}\bigl(\operatorname{Inf}_{\overline K}^{K}(N),M\bigr)
\cong
\Hom_{\overline K}\bigl(N,M^{K'}\bigr).
\]
Thus $\operatorname{Inf}_{\overline K}^{K}$ is left adjoint to $(-)^{K'}$. Since $(-)^{K'}$ is a right adjoint to the inflation functor, which is exact and preserves injectives, the final statement follows.
\end{proof}

\section{General Cyclotomic descent}

In this section, we assume that $A\in \modcat[G]$ is an $p$-primary discrete abelian group endowed with the trivial $G$-action. For a $K$-module $M$, we denote $\rho_M(k):M\to M$ by the action map induced by $k \in K$.

\begin{defn}\label{def: weight_twist}
    Let $\vartheta:\Gamma\to \Z_p^\times$ be a continuous character. For $M\in \modcat[\Gamma]$, twisting by $\vartheta$ defines an exact autoequivalence
    \[
    (-)\{\vartheta\}:\modcat[\Gamma]
    \longrightarrow
    \modcat[\Gamma]
    \]
    \[
    M\longmapsto M\{\vartheta\}
    \]
    by keeping the same underlying abelian group and rescaling the $\Gamma$-action:
    \[
    \rho_{M\{\vartheta\}}(\delta):=\vartheta(\delta)\cdot\rho_M(\delta)
    \qquad
    (\delta\in \Gamma).
    \]
\end{defn}

\begin{defn}\label{def: character_twist}
    Let $\vartheta:\Gamma\to \Z_p^\times$ be a continuous character. Twinsting by $\vartheta$ defines an exact autoequivalence
    \[
    (-)(\vartheta):\modcat[G]
    \longrightarrow
    \modcat[G]
    \]
    \[
    M\longmapsto M(\vartheta)
    \]
    by keeping the same underlying abelian group and twisting the $G$-action:
    \[
    \rho_{M(\vartheta)}(g)(x):=\vartheta(\bar g)\cdot\rho_{M}(g)(x),
    \]
    where $\bar g$ is the image of $g\in G$ in $\Gamma$.
\end{defn}

The above actions are continuous. Let us prove this for $M(\vartheta)$; the proof for $M\{\vartheta\}$ is almost the same. Let $x\in M$. We can find a natural number $n$ and an open subgroup $U\subset G$ such that $x$ is killed by $p^n$ and fixed by $U$, because $M$ is $p$-primary and the action map is continuous. Then a subgroup
\[
V=U\cap \ker\bigl(G\twoheadrightarrow \Gamma\xrightarrow{\vartheta}\Z_p^\times\to (\Z/p^n\Z)^\times\bigr)
\]
fixes $x$ for the twisted action, so $V\subset\Stab_{M(\vartheta)}(x)$.
The maps in the kernel are all continuous, and $\{1\}\subset (\Z/p^n\Z)^\times$ is open by the discrete topology. Hence the kernel is open as a preimage of an open set, so $V$ is an open subgroup of $G$. It implies that $\Stab_{M(\vartheta)}(x)$ is open and the twisted action is continuous.

\medskip

$(-)\{\vartheta\}$ is exact since it keeps the underlying abelian group. Furthermore, $\vartheta^{-1}$ gives its quasi-inverse $(-)\{\vartheta^{-1}\}$; hence $(-)\{\vartheta\}$ is autoequivalence on $\modcat[\Gamma]$.
Applying these degreewise defines an autoequivalence on $\chaincat[\Gamma]$. Since the quasi-inverse $(-)\{\vartheta^{-1}\}$ is also exact, the functor $(-)\{\vartheta\}$ preserves injectives and hence induces an exact autoequivalence, again denoted $(-)\{\vartheta\}$ on $\dmodcat[\Gamma]$. 
For the same reason, $(-)(\vartheta)$ is an exact autoequivalence on $\modcat[G]$ and also we extend an exact autoequivalence $(-)(\vartheta)$ on $\dmodcat[G]$.

\medskip

For every integer $m$, one may realize that the action of $A(\chi_\Gamma^m)$ is the same as the action of the usual Tate twist $A(m)$, because $A$ is endowed with the trivial $G$-action. In this light, for an integer $m$, we denote 
\[
A(m)=A(\chi_\Gamma^m)
\]
and
\[
M^\bullet\{m\}:=M^\bullet\{\chi_\Gamma^m\},
\]
\[
M^\bullet(m):=M^\bullet(\chi_\Gamma^m).
\]
Note that, one has canonical isomorphisms of $H$-modules $A(m)\big|_H\cong A\big|_H$, because $H=\ker(\chi_{\mathrm{cyc}})$.

\begin{defn}[Universal cyclotomic complexes]
For a discrete $G$-module $A$ equipped with trivial $G$-action, define
\[
\X(A):=R\Gamma(H,A)\in \dmodcat[\Gamma].
\]
The $\Gamma$-action on this derived object is the natural one induced by conjugation through the quotient map $G\twoheadrightarrow G/H\simeq \Gamma$.
\end{defn}

\begin{prop}\label{prop: twisting_commutes}
Let $\vartheta:\Gamma\to \Z_p^\times$ be a continuous character and let $M^\bullet\in\chaincat[G]$. Then there is a canonical equivalence 
\[
R\Gamma(H,M^\bullet(\vartheta))\simeq R\Gamma(H,M^\bullet)\{\vartheta\}
\]
in $\dmodcat[\Gamma]$.
\end{prop}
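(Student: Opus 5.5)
The plan is to prove the equivalence already at the level of abelian categories and then pass to derived functors. The key observation is that for $N\in\modcat[G]$ the subgroups of invariants $N(\vartheta)^H$ and $N^H$ coincide as subsets of the common underlying group. Indeed, for $h\in H$ the image $\bar h\in\Gamma$ is trivial, so $\vartheta(\bar h)=1$ and $\rho_{N(\vartheta)}(h)=\rho_N(h)$. The induced $\Gamma$-actions on this common subgroup differ exactly by $\vartheta$. For $\delta\in\Gamma$ lifted to $g\in G$, we have $\rho_{N(\vartheta)}(g)=\vartheta(\delta)\rho_N(g)$ on $N^H$, which is precisely $\rho_{N^H\{\vartheta\}}(\delta)$. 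This gives a natural isomorphism of functors $\modcat[G]\to\modcat[\Gamma]$:
\[
(-)^H\circ(-)(\vartheta)\;\cong\;(-)\{\vartheta\}\circ(-)^H .
\]
Naturality is clear, since both sides are the identity on underlying maps.

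Next I derive both sides. Right-derive the left-hand composite using the standard composite-functor principle. The twist $(-)(\vartheta)$ is an exact autoequivalence on $\modcat[G]$, and by the preceding discussion (with its quasi-inverse $(-)(\vartheta^{-1})$) it preserves injectives. Concretely, choose an injective resolution $M^\bullet\to I^\bullet$ in $\chaincat[G]$. Then $M^\bullet(\vartheta)\to I^\bullet(\vartheta)$ is again a quasi-isomorphism, because the twist does not change underlying groups, and $I^\bullet(\vartheta)$ is a bounded-below complex of injectives. Hence
\[
R\Gamma(H,M^\bullet(\vartheta))\simeq I^\bullet(\vartheta)^H .
\]
On the other side, $(-)^H$ preserves injectives by Proposition \ref{prop: inflation_adjunction}, so $(I^\bullet)^H$ is an injective model for $R\Gamma(H,M^\bullet)$. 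The exact functor $\{\vartheta\}$ preserves injectives and acts degreewise on the injective model. Therefore
\[
R\Gamma(H,M^\bullet)\{\vartheta\}\simeq (I^\bullet)^H\{\vartheta\}.
\]
Applying the natural isomorphism from the first step degreewise identifies the complexes $I^\bullet(\vartheta)^H$ and $(I^\bullet)^H\{\vartheta\}$, compatibly with differentials since it is natural. This yields the equivalence.

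For canonicity, I check independence of the chosen resolution. Two injective resolutions are related by a chain homotopy equivalence that is unique up to homotopy, and both constructions are functorial in the complex of injectives, so the identifications are compatible. In $\infty$-categorical language, both sides are computed as functors on the dg nerve of injective complexes, where the degreewise natural isomorphism gives a natural equivalence of functors. The main point requiring care, rather than real difficulty, is that the derived twist functors are genuinely computed degreewise on injective models, i.e. that $(-)(\vartheta)$ and $\{\vartheta\}$ preserve injectives. This is exactly what the remark and the preceding paragraph supply via the exact quasi-inverses.
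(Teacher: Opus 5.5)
Your proposal is correct and follows essentially the same route as the paper. Both arguments identify $N(\vartheta)^H$ with $(N^H)\{\vartheta\}$ because $\vartheta$ is trivial on $H$, take an injective resolution $M^\bullet\to I^\bullet$, use that the twist preserves injectives so that $I^\bullet(\vartheta)$ resolves $M^\bullet(\vartheta)$, and apply the identification termwise to get $(I^\bullet(\vartheta))^H\cong ((I^\bullet)^H)\{\vartheta\}$; your added remarks on canonicity and on $(I^\bullet)^H$ being an injective model only make explicit what the paper leaves implicit.
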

\begin{proof}
Since $\vartheta$ factors through $\Gamma\simeq G/H$, the $H$-action on $M^\bullet(\vartheta)$ is the same as the $H$-action on $M^\bullet$. Thus there is a natural identification of left exact functors
\[
\bigl(M^\bullet(\vartheta)\bigr)^H \cong \bigl((M^\bullet )^H\bigr)\{\vartheta\}
\]
in $\modcat[\Gamma]$. Choose a bounded-below injective resolution $M^\bullet\to I^\bullet$ in $\modcat[G]$. Since $(-)(\vartheta)$ preserves injectives, $M^\bullet(\vartheta)\longrightarrow I^\bullet(\vartheta)$ is an injective resolution of $M^\bullet(\vartheta)$. Taking $H$-invariants termwise and using the preceding identification gives an isomorphism of complexes
\[
\bigl(I^\bullet(\vartheta)\bigr)^H \cong \bigl((I^\bullet)^H\bigr)\{\vartheta\}.
\]
Passing to the derived $\infty$-category yields
\[
R\Gamma(H,M^\bullet(\vartheta))
\simeq
\bigl(I^\bullet(\vartheta)\bigr)^H
\simeq
\bigl((I^\bullet)^H\bigr)\{\vartheta\}
\simeq
R\Gamma(H,M^\bullet)\{\vartheta\}.
\]
\end{proof}

\begin{cor}\label{cor: upstair_identification}
For every integer $m$, there are canonical equivalences in $\dmodcat[\Gamma]$
\[
R\Gamma(H,A(m))\simeq \X(A)\{m\}.
\]
\end{cor}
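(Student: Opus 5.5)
The plan is to deduce the corollary directly from Proposition~\ref{prop: twisting_commutes}, applied to the complex $M^\bullet = A$ concentrated in degree $0$ and the character $\vartheta=\chi_\Gamma^m$. First I will check that the hypotheses apply. $\chi_\Gamma^m:\Gamma\to\Z_p^\times$ is a continuous character for every integer $m$, including negative $m$, because $\chi_\Gamma$ is a continuous isomorphism onto $\Z_p^\times$ and inversion in $\Z_p^\times$ is continuous. The module $A$, placed in degree $0$, is an object of $\chaincat[G]$.

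By the notational convention fixed just before the definition of universal cyclotomic complexes, $A(m)$ means $A(\chi_\Gamma^m)$, and $M^\bullet\{m\}$ means $M^\bullet\{\chi_\Gamma^m\}$. So Proposition~\ref{prop: twisting_commutes} gives a canonical equivalence
\[
R\Gamma(H,A(\chi_\Gamma^m))\simeq R\Gamma(H,A)\{\chi_\Gamma^m\}
\]
in $\dmodcat[\Gamma]$. Rewriting the left side as $R\Gamma(H,A(m))$, and the right side as $\X(A)\{m\}$ by the definition $\X(A)=R\Gamma(H,A)$, yields the claim.

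The only point needing a moment's care is compatibility of the $\Gamma$-structures. The $\Gamma$-action on $\X(A)$ is, by definition, the one induced through $G\twoheadrightarrow \Gamma$. This is the same action used on $R\Gamma(H,M^\bullet)$ in the proposition, since both come from the functor $(-)^H:\modcat[G]\to\modcat[\Gamma]$ derived via injective resolutions. Canonicity is inherited from the proposition: the equivalence is induced by the identity on underlying groups of a chosen injective resolution, and is independent of that choice up to contractible ambiguity. I do not expect any genuine obstacle here; the corollary is a direct specialization of the proposition.
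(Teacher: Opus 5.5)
Your proposal is correct and is essentially the paper's proof: specialize Proposition~\ref{prop: twisting_commutes} to $M^\bullet=A$ and $\vartheta=\chi_\Gamma^m$, then rewrite $A(\chi_\Gamma^m)=A(m)$ and $R\Gamma(H,A)=\X(A)$. Your extra checks, namely continuity of $\chi_\Gamma^m$ for negative $m$ and agreement of the $\Gamma$-actions, are sound and only make explicit what the paper leaves implicit.
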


\begin{proof}
Apply the previous lemma with $M^\bullet=A$ and $\vartheta=\chi_\Gamma^m$. Since $A$ has trivial $G$-action, $A(\chi_\Gamma^m)=A(m)$. Hence
\[
R\Gamma(H,A(m))
\simeq
R\Gamma(H,A)\{m\}
=
\X(A)\{m\}.
\]
\end{proof}

\begin{defn}
For $M^\bullet\in \dmodcat[\Gamma]$ and $m\in \Z$, define the \emph{$m$-th cyclotomic descent} of $M^\bullet$ by
\[
\mathrm{CD}_m(M^\bullet):=R\Gamma(\Gamma,M^\bullet\{m\})
\in \dmodcat[].
\]
\end{defn}

\begin{prop}[Hochschild--Serre]\label{prop: Hochschild_Serre}
Let $K$ be a profinite group and $K'$ its closed normal subgroup. Let
\[
1\longrightarrow K'\longrightarrow K\longrightarrow \overline K\longrightarrow 1
\]
be a short exact sequence of profinite groups. Then for every $M^\bullet\in \dmodcat[K]$, there is a canonical equivalence
\[
R\Gamma(K,M^\bullet)\simeq R\Gamma\bigl(\overline K,R\Gamma(K',M^\bullet)\bigr)
\]
in $\dmodcat[]$.
\end{prop}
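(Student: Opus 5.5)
The plan is to deduce the equivalence from the corresponding equality of underived functors, using Grothendieck's composition-of-derived-functors argument. First I would record the identity of left exact functors $\modcat[K]\to\modcat[]$:
\[
(-)^K=(-)^{\overline K}\circ(-)^{K'}.
\]
This holds because an element of $M$ is fixed by $K$ if and only if it is fixed by $K'$ and its image in $M^{K'}$ is fixed by $K/K'=\overline K$. By the Remark citing \cite[Proposition 5.2.4 (ii)]{nsw}, all three derived functors $R\Gamma(K,-)$, $R\Gamma(K',-)$, $R\Gamma(\overline K,-)$ exist on the bounded-below derived $\infty$-categories. Each can be computed by applying the invariant functor termwise to a bounded-below injective resolution.

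Next, let $M^\bullet\in\dmodcat[K]$ and choose a bounded-below complex of injectives $I^\bullet$ in $\modcat[K]$ representing it. Then $R\Gamma(K',M^\bullet)$ is represented by $(I^\bullet)^{K'}$ in $\chaincat[\overline K]$. The key input is Proposition~\ref{prop: inflation_adjunction}: $(-)^{K'}$ is right adjoint to the exact inflation functor, so it preserves injectives. Hence $(I^\bullet)^{K'}$ is again a bounded-below complex of injectives in $\modcat[\overline K]$. Therefore $R\Gamma(\overline K,-)$ applied to it is computed termwise with no further resolution:
\[
R\Gamma\bigl(\overline K,R\Gamma(K',M^\bullet)\bigr)\simeq \bigl((I^\bullet)^{K'}\bigr)^{\overline K}=(I^\bullet)^{K}\simeq R\Gamma(K,M^\bullet).
\]
The middle equality is the termwise identity of functors from the first step.

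Finally, I would address canonicity and functoriality at the $\infty$-categorical level. On the injective model, $R\Gamma(K',-)$ is literally the dg functor $I^\bullet\mapsto (I^\bullet)^{K'}$ between dg categories of bounded-below injective complexes, and this lands in injectives by the step above. So the composite $R\Gamma(\overline K,-)\circ R\Gamma(K',-)$ is represented by the dg functor $(-)^{\overline K}\circ(-)^{K'}=(-)^K$, and the identification is an equality of dg functors, hence natural after taking dg nerves. Independence of the choice of resolution is standard: two injective resolutions are homotopy equivalent, and invariant functors preserve chain homotopies.

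I expect the main obstacle to be exactly this preservation of injectives (equivalently, of $(-)^{\overline K}$-acyclics). Without it the composite of derived functors would not equal the derived functor of the composite. It is already supplied by Proposition~\ref{prop: inflation_adjunction}, so what remains is essentially bookkeeping.
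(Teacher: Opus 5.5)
Your proposal is correct and matches the paper's proof: resolve $M^\bullet$ by a bounded-below complex of injectives $I^\bullet$, and use Proposition~\ref{prop: inflation_adjunction} to see that $(I^\bullet)^{K'}$ is again a complex of injective $\overline K$-modules. You then conclude from $\bigl((I^\bullet)^{K'}\bigr)^{\overline K}=(I^\bullet)^K$, exactly as the paper does. Your remark that the identification is an equality of dg functors on the injective model spells out what the paper compresses into its closing sentence that the equivalence is functorial in $M^\bullet$.
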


\begin{proof}
Choose a bounded-below complex of injective $K$-modules $I^\bullet$ representing $M^\bullet$. Since $(-)^{K'}$ preserves injectives (see \ref{prop: inflation_adjunction}), $(I^\bullet)^{K'}$ is a bounded-below complex of injective $\overline K$-modules. Therefore
\[
R\Gamma\bigl(\overline K,R\Gamma(K',M^\bullet)\bigr)
\simeq
\bigl((I^\bullet)^{K'}\bigr)^{\overline K}
=
(I^\bullet)^K
\simeq
R\Gamma(K,M^\bullet).
\]
This equivalence is functorial in $M^\bullet$.
\end{proof}

\begin{thm}[Cyclotomic descent]\label{thm: cyclotomic_descent}
    Let $\vartheta:\Gamma\to \Z_p^\times$ be a continuous character.
    Then there is a canonical equivalence
    \[
    R\Gamma(G,A(\vartheta))
    \simeq R\Gamma\bigl(\Gamma,\X(A)\{\vartheta\}\bigr).
    \]
    In particular, for every integer $m$, 
    \[
    R\Gamma(G,A(m))
    \simeq R\Gamma\bigl(\Gamma,\X(A)\{m\}\bigr)
    \simeq \mathrm{CD}_m(\X(A)).
    \]
\end{thm}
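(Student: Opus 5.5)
The plan is to combine the Hochschild--Serre equivalence of Proposition~\ref{prop: Hochschild_Serre} with the compatibility of twisting and $H$-invariants in Proposition~\ref{prop: twisting_commutes}. Apply Proposition~\ref{prop: Hochschild_Serre} to the exact sequence $1\to H\to G\to \Gamma\to 1$. Here $H$ is closed and normal, being the kernel of the continuous character $\chi_{\mathrm{cyc}}$. Take $M^\bullet=A(\vartheta)$, placed in degree $0$, as an object of $\dmodcat[G]$. This gives a canonical equivalence
\[
R\Gamma(G,A(\vartheta))\simeq R\Gamma\bigl(\Gamma,R\Gamma(H,A(\vartheta))\bigr)
\]
in $\dmodcat[]$.

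Next, identify the inner term. Apply Proposition~\ref{prop: twisting_commutes} with $M^\bullet=A$, viewed as a complex concentrated in degree $0$. This yields
\[
R\Gamma(H,A(\vartheta))\simeq R\Gamma(H,A)\{\vartheta\}=\X(A)\{\vartheta\}
\]
in $\dmodcat[\Gamma]$. Since $R\Gamma(\Gamma,-)$ is a functor on $\dmodcat[\Gamma]$, it carries this equivalence to one in $\dmodcat[]$. Composing with the first equivalence gives the main statement. Canonicity follows because each step is canonical: the injective resolution is unique up to contractible choice, and the isomorphisms of complexes are natural.

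For the particular case, set $\vartheta=\chi_\Gamma^m$. By the convention $A(m)=A(\chi_\Gamma^m)$ and $M^\bullet\{m\}=M^\bullet\{\chi_\Gamma^m\}$, the first equivalence reads $R\Gamma(G,A(m))\simeq R\Gamma(\Gamma,\X(A)\{m\})$. This is also Corollary~\ref{cor: upstair_identification} combined with Hochschild--Serre. The second equivalence is just the definition of $\mathrm{CD}_m$ applied to $M^\bullet=\X(A)$.

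The only point needing care is the interaction of the two steps at the level of models. Hochschild--Serre uses an injective resolution $I^\bullet$ of $A(\vartheta)$ in $\modcat[G]$. The twisting proposition uses $I^\bullet(\vartheta)$, where $I^\bullet$ is now an injective resolution of $A$. To reconcile them, I would use a single resolution $A\to I^\bullet$ and take $I^\bullet(\vartheta)$ as the resolution of $A(\vartheta)$; this is legitimate because twisting preserves injectives (remark after Definition~\ref{def: character_twist}). Then both sides are computed by the complex
\[
\bigl(I^\bullet(\vartheta)\bigr)^G=\Bigl(\bigl((I^\bullet)^H\bigr)\{\vartheta\}\Bigr)^\Gamma.
\]
Here $(I^\bullet)^H\{\vartheta\}$ is a complex of injective $\Gamma$-modules, since $(-)^H$ preserves injectives by Proposition~\ref{prop: inflation_adjunction} and $\{\vartheta\}$ preserves injectives. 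So this complex legitimately computes $R\Gamma(\Gamma,-)$, and no further obstacle is expected.
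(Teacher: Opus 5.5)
Your proposal is correct and is essentially the paper's proof: you apply Hochschild--Serre to $1\to H\to G\to\Gamma\to 1$, then identify $R\Gamma(H,A(\vartheta))\simeq\X(A)\{\vartheta\}$, and for $\vartheta=\chi_\Gamma^m$ you unwind the definition of $\mathrm{CD}_m$. Your citation of Proposition~\ref{prop: twisting_commutes} is the more accurate reference for a general $\vartheta$ (the paper cites Corollary~\ref{cor: upstair_identification}, which is only stated for integer twists), and your single-resolution compatibility check is a harmless extra.
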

\begin{proof}
    Applying Proposition \ref{prop: Hochschild_Serre} for the extension
    \[
    1\longrightarrow H\longrightarrow G\longrightarrow \Gamma\longrightarrow 1,
    \]
    one has
    \[
    R\Gamma(G,A(\vartheta))
    \simeq
    R\Gamma\bigl(\Gamma,R\Gamma(H,A(\vartheta))\bigr).
    \]
    By Corollary \ref{cor: upstair_identification},
    \[
    R\Gamma(H,A(\vartheta))\simeq \X(A)\{\vartheta\}.
    \]
    Therefore
    \[
    R\Gamma(G,A(\vartheta))
    \simeq
    R\Gamma\bigl(\Gamma,\X(A)\{\vartheta\}\bigr).
    \]
    If $\vartheta=\chi^m_\Gamma$ for an integer $m$, the latter is equal to $\mathrm{CD}_m(\X(A))$.
\end{proof}

\begin{cor}
For every integer $m$, there is a convergent spectral sequence
\[
E_2^{a,b}
=
H^a\bigl(\Gamma,H^b(H,A)\{m\}\bigr)
\Longrightarrow
H^{a+b}(G,A(m)).
\]
\end{cor}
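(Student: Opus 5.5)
The plan is to obtain the spectral sequence as the hypercohomology spectral sequence of the derived functor $R\Gamma(\Gamma,-)$, applied to the object $\X(A)\{m\}\in\dmodcat[\Gamma]$. We then identify its abutment using Theorem \ref{thm: cyclotomic_descent}. In outline: the Grothendieck spectral sequence for the composite $(-)^G=((-)^H)^\Gamma$ comes from the equivalence of Proposition \ref{prop: Hochschild_Serre}. Its $E_2$-term is then rewritten using Corollary \ref{cor: upstair_identification}.

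\emph{Step 1: the abutment.} By Theorem \ref{thm: cyclotomic_descent}, $R\Gamma(G,A(m))\simeq R\Gamma(\Gamma,\X(A)\{m\})$. Taking $H^{n}$ gives $H^n(G,A(m))\cong H^n\bigl(\Gamma,\X(A)\{m\}\bigr)$, where the right side denotes hypercohomology.

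\emph{Step 2: the hypercohomology spectral sequence.} Fix a bounded-below complex $C^\bullet$ of $\Gamma$-modules representing $\X(A)\{m\}$; for instance take $C^\bullet=(I^\bullet(m))^H$ with $I^\bullet$ an injective resolution of $A$ in $\modcat[G]$. Choose a Cartan--Eilenberg injective resolution $C^\bullet\to J^{\bullet,\bullet}$ in $\modcat[\Gamma]$, which exists because the category has enough injectives. Consider the double complex $(J^{\bullet,\bullet})^\Gamma$. Its total complex computes $R\Gamma(\Gamma,C^\bullet)$. Filtering by the resolution direction gives the second hypercohomology spectral sequence
\[
E_2^{a,b}=H^a\bigl(\Gamma,H^b(C^\bullet)\bigr)\Longrightarrow H^{a+b}(\Gamma,C^\bullet).
\]
It converges because $C^\bullet$ is bounded below: both indices are bounded below, say $a\ge 0$ and $b\ge 0$. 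Hence each total degree receives only finitely many nonzero terms, and the filtration is finite in each degree.

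\emph{Step 3: identify $E_2$.} We need $H^b(\X(A)\{m\})\cong H^b(H,A)\{m\}$ as $\Gamma$-modules. This holds because $(-)\{m\}$ is exact and does not change underlying groups, so it commutes with taking cohomology objects; combined with $H^b(\X(A))=H^b(H,A)$ by definition. One could also invoke Corollary \ref{cor: upstair_identification} to view $E_2$ as $H^a(\Gamma,H^b(H,A(m)))$.

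The main subtlety is Step 2: ensuring convergence and that the construction is compatible with the $\infty$-categorical setup. Convergence follows from bounded-belowness, as noted. Compatibility holds because $\dmodcat[\Gamma]$ has the usual derived category as its homotopy category, so the classical Cartan--Eilenberg construction applies verbatim. Canonicity, i.e.\ independence of the choices, follows from the standard uniqueness of Cartan--Eilenberg resolutions up to homotopy.
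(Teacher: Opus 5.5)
Your proposal is correct and is essentially the paper's argument. The paper cites the Grothendieck spectral sequence for $(-)^G=((-)^H)^\Gamma$, which is valid because $(-)^H$ preserves injectives (Proposition~\ref{prop: inflation_adjunction}); your Steps 1--2 are the standard construction of that same spectral sequence, namely the hypercohomology spectral sequence of $R\Gamma(\Gamma,-)$ applied to $(I^\bullet(m))^H$, with the abutment supplied by Theorem~\ref{thm: cyclotomic_descent}, and your identification of the $E_2$-page via exactness of $(-)\{m\}$ (or Corollary~\ref{cor: upstair_identification}) matches the paper's.
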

\begin{proof}
Consider the composite of left exact functors
\[
\modcat[G]
\xrightarrow{(-)^H}
\modcat[\Gamma]
\xrightarrow{(-)^\Gamma}
\modcat[].
\]
By Proposition \ref{prop: inflation_adjunction}, the functor $(-)^H$ preserves injectives. Hence the Grothendieck spectral sequence for this composite yields
\[
E_2^{a,b}
=
H^a\bigl(\Gamma,H^b(H,A(m))\bigr)
\Longrightarrow
H^{a+b}(G,A(m)).
\]
Using Corollary \ref{cor: upstair_identification} and the exactness of twisting, one gets
\[
H^b(H,A(m))
\simeq
h^b(\X(A)\{m\})
\simeq
h^b(\X(A))\{m\}
=
H^b(H,A)\{m\}.
\]
This identifies the $E_2$-page with the displayed one.
\end{proof}

Recall that $\omega_\Delta=\omega_\Gamma|_\Delta:\Delta\to \mu_{p-1}\subset \Z_p^\times$ is the restriction of the Teichm\"uller character to $\Delta$.
\begin{defn}[Teichm\"uller branch idempotents]
For each residue class $j\in \Z/(p-1)\Z$, define
\[
e_j
=
\frac{1}{p-1}
\sum_{\delta\in \Delta}\omega_\Delta^{-j}(\delta)[\delta]
\in \Z_p[\Delta].
\]
Here we identify the codomain $\mu_{p-1}$ of $\omega_\Delta$ with a subset of $\Z_p$.
It can be easily checked that $e_j$ are complete orthogonal idempotents, i.e., $\sum e_j=1$, $e_ie_j=0$ and $e_je_j=e_j$ for $i\neq j$.
If $M$ is a discrete $\Gamma$-module, define the \emph{$e_j$-branch} of $M$ by $M^{(j)}:=e_jM$, the image of the idempotent $e_j$. Then $M$ decomposes functorially as
\[
M=\bigoplus_{j\in \Z/(p-1)\Z} e_jM.
\]
Note that each branch depends only on the residue of $j$ modulo $p-1$. Hence the endofunctor $M\mapsto e_jM$ is exact on $\modcat[\Gamma]$. Moreover $e_jM$ is a direct factor of $M$, so if $M$ is injective then $e_jM$ is injective as well. Hence $e_j$ extends degreewise to bounded-below complexes of injectives and therefore to an exact endofunctor, again denoted $e_j$ on $\chaincat[\Gamma]$ and therefore on $\dmodcat[\Gamma]$.
All branch indices are understood modulo $p-1$.
\end{defn}

\begin{lem}\label{lem: Delta_exact}
    The functor of $\Delta$-invariants
    \[
    (-)^\Delta:\modcat[\Gamma]
    \longrightarrow
    \modcat[\Gamma_1]
    \]
    is exact, and therefore
    \[
    R\Gamma(\Delta,-)\simeq (-)^\Delta
    \]
    in $\dmodcat[\Gamma_1]$.
\end{lem}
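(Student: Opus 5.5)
The plan is to identify $(-)^\Delta$ with the branch functor $e_0$, whose exactness was already recorded in the definition of the Teichm\"uller branch idempotents, and then deduce the derived statement from the fact that an exact functor preserving injectives needs no resolution.

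First, fix $M\in\modcat[\Gamma]$. Since $\Gamma=\Delta\times\Gamma_1$ is abelian, $M^\Delta$ is stable under $\Gamma_1$, so it is a discrete $\Gamma_1$-module. Because $p$ is odd, $|\Delta|=p-1$ is a unit in $\Z_p$, and $e_0=\frac{1}{p-1}\sum_{\delta\in\Delta}[\delta]$ acts on the $p$-primary group $M$. I will show $M^\Delta=e_0M$. If $x\in M^\Delta$, then $e_0x=x$. Conversely, $\delta e_0=e_0$ for every $\delta\in\Delta$, by reindexing the sum, so $e_0M\subset M^\Delta$.

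Hence $(-)^\Delta$, viewed as a functor to $\Z_p$-modules, is the direct-summand functor $M\mapsto e_0M$. This is exact, since a short exact sequence splits into its branch components and the functor $e_0$ commutes with all morphisms. Exactness in $\modcat[\Gamma_1]$ then follows because the forgetful functor reflects exactness.

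For the derived statement, I would note that $(-)^\Delta$ is the invariants functor for the closed normal subgroup $\Delta\triangleleft\Gamma$ with quotient $\Gamma_1$. By Proposition~\ref{prop: inflation_adjunction} it preserves injectives. Represent $M^\bullet\in\dmodcat[\Gamma]$ by a bounded-below complex of injectives $I^\bullet$. Then $R\Gamma(\Delta,M^\bullet)$ is computed by $(I^\bullet)^\Delta$. Since $(-)^\Delta$ is exact, it preserves quasi-isomorphisms, so $(I^\bullet)^\Delta\simeq (M^\bullet)^\Delta$ with the functor applied degreewise to any representative. This gives $R\Gamma(\Delta,-)\simeq(-)^\Delta$.

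The only point needing care, and the step I regard as the main obstacle, is making sense of $(-)^\Delta$ on arbitrary complexes as an exact functor on the $\infty$-category. I expect to handle it as in the Remark preceding Proposition~\ref{prop: inflation_adjunction}: an exact functor that preserves injectives induces an exact functor on the injective model, and there it agrees with the right derived functor.
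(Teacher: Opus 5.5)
Your proposal is correct and follows essentially the paper's route. The paper's averaging operator $P_\Delta=\frac{1}{p-1}\sum_{\delta\in\Delta}\delta$ is exactly your $e_0$: it shows $N^\Delta=P_\Delta N$ and gets exactness by lifting surjections through the $\Gamma_1$-equivariant projector $P_\Delta$, which amounts to your direct-summand argument. Your extra paragraph on the derived statement (preservation of injectives via Proposition~\ref{prop: inflation_adjunction}, plus the fact that exact functors preserve quasi-isomorphisms) spells out what the paper leaves implicit in its ``therefore''.
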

\begin{proof}
    Since $|\Delta|=p-1$ is prime to $p$, the averaging operator
    \[
    P_\Delta:=
    \frac{1}{p-1}\sum_{\delta\in \Delta}\delta
    \]
    defines a projection onto $\Delta$-invariants on every discrete $p$-primary $\Gamma$-module. Indeed, if $x\in N^\Delta$, then $P_\Delta\cdot x=\frac{1}{p-1}\sum_{\delta\in\Delta}x=x$. Conversely, for $x=P_\Delta \cdot y$, $\delta\cdot x=\delta\frac{1}{p-1}\sum_{\epsilon\in \Delta}\epsilon y=\frac{1}{p-1}\sum_{\epsilon'\in\Delta}\epsilon'y=x$ for all $\delta\in\Delta$. Hence we have $N^\Delta=P_\Delta N$ for any $\Gamma$-module $N$. Because $\Gamma=\Delta\times\Gamma_1$, every element of $\Gamma_1$ commutes with every element of $\Delta$, so $P_\Delta$ is $\Gamma_1$-equivariant.
    Let $f:N\to N'$ be a surjection in $\modcat[\Gamma]$. For any $P_\Delta \cdot y\in P_\Delta N'$, we can take $x\in N$ such that $f(x)=y$. Then $f^\Delta(P_\Delta\cdot x)=P_\Delta\cdot f(x)=P_\Delta\cdot y$ yields that $f^\Delta$ is surjective; thus $P_\Delta=(-)^\Delta$ is exact.
\end{proof}

Recall that the restriction of any continuous character $\Gamma\to\Z_p^\times$ to $\Delta$ is equal to $\omega_\Delta^m$ for some $m\in\Z/(p-1)\Z$.

\begin{prop}[Branch decomposition of cyclotomic descent]\label{prop: branch_decomposition}
    Let $\vartheta:\Gamma\to\Z_p^\times$ be a continuous character with $\vartheta|_\Delta =\omega_\Delta^m$ for some $m\in\Z/(p-1)\Z$. Let $M^\bullet\in \dmodcat[\Gamma]$. Then
    \[
    R\Gamma\bigl(\Gamma, M^\bullet\{\vartheta\}\bigr)
    \simeq R\Gamma\bigl(\Gamma_1,\,(e_{-m}M^\bullet)\{\vartheta\}\bigr)
    \]
    in $\dmodcat[]$, where $(e_{-m}M^\bullet)\{\vartheta\}$ is viewed as an object of $\dmodcat[\Gamma_1]$ by restricting the twisted $\Gamma$-action along $\Gamma_1\hookrightarrow \Gamma$.
    In particular, for every integer $m$, we have
    \[
    \mathrm{CD}_m(M^\bullet)
    \simeq R\Gamma\bigl(\Gamma_1,\,(e_{-m}M^\bullet)\{m\}\bigr).
    \]
\end{prop}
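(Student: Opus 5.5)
The plan is to apply Hochschild--Serre (Proposition \ref{prop: Hochschild_Serre}) to the split extension $1\to\Delta\to\Gamma\to\Gamma_1\to 1$, which comes from $\Gamma=\Delta\times\Gamma_1$. This gives
\[
R\Gamma\bigl(\Gamma,M^\bullet\{\vartheta\}\bigr)\simeq R\Gamma\bigl(\Gamma_1,R\Gamma(\Delta,M^\bullet\{\vartheta\})\bigr).
\]
By Lemma \ref{lem: Delta_exact}, $R\Gamma(\Delta,-)\simeq(-)^\Delta$ is computed degreewise on any representative. So everything reduces to identifying $(N\{\vartheta\})^\Delta$ with $(e_{-m}N)\{\vartheta\}$ as discrete $\Gamma_1$-modules, naturally in $N\in\modcat[\Gamma]$, and then extending to complexes.

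The key identification goes as follows. Let $x\in N$. Then $x$ is $\Delta$-invariant for the twisted action exactly when $\omega_\Delta^m(\delta)\,\delta x=x$ for all $\delta\in\Delta$, that is, when $\delta x=\omega_\Delta^{-m}(\delta)x$. I will show that this eigenspace equals $e_{-m}N$.
\begin{itemize}
\item If $x$ lies in the eigenspace, then $e_{-m}x=\frac{1}{p-1}\sum_\delta\omega_\Delta^{m}(\delta)\omega_\Delta^{-m}(\delta)x=x$.
\item Conversely, $\delta e_{-m}=\omega_\Delta^{-m}(\delta)e_{-m}$ in $\Z_p[\Delta]$, by reindexing the sum.
\end{itemize}
Equivalently, the twisted averaging operator $P_\Delta$ on $N\{\vartheta\}$ is exactly $e_{-m}$ acting on $N$. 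Since $\Delta$ and $\Gamma_1$ commute, $e_{-m}$ is $\Gamma_1$-equivariant. Moreover, the $\Gamma_1$-action on both sides is the $\vartheta$-twisted one, restricted along $\Gamma_1\hookrightarrow\Gamma$. This gives an equality of subobjects
\[
(N\{\vartheta\})^\Delta=(e_{-m}N)\{\vartheta\}\big|_{\Gamma_1},
\]
natural in $N$.

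For the derived statement, choose a bounded-below injective complex $I^\bullet$ representing $M^\bullet$. Twisting preserves injectives, and $(-)^\Delta$ preserves injectives by Proposition \ref{prop: inflation_adjunction}. Hence $(I^\bullet\{\vartheta\})^\Delta=(e_{-m}I^\bullet)\{\vartheta\}$ is a complex of injective $\Gamma_1$-modules, and taking $\Gamma_1$-invariants yields the claimed equivalence. The special case is $\vartheta=\chi_\Gamma^m$, which restricts to $\omega_\Delta^m$ on $\Delta$ because $\chi_\Gamma|_\Delta=\omega_\Delta$; combining this with the definition of $\mathrm{CD}_m$ gives the second formula.

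The main obstacle is bookkeeping rather than depth. One point is the sign convention: one must check that invariance under the twisted action picks out $e_{-m}$ and not $e_m$, which is exactly the eigenvalue computation above. The other is making sure the restriction-to-$\Gamma_1$ functor on $\dmodcat[\Gamma]$ is compatible with the Hochschild--Serre quotient $\Gamma\to\Gamma/\Delta\cong\Gamma_1$. This holds because the splitting identifies $\Gamma/\Delta$ with the subgroup $\Gamma_1$, and under this identification the induced action on $\Delta$-invariants is just the restricted action.
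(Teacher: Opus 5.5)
Your proposal is correct and follows essentially the same route as the paper: Hochschild--Serre for $1\to\Delta\to\Gamma\to\Gamma_1\to 1$, then Lemma~\ref{lem: Delta_exact} to replace $R\Gamma(\Delta,-)$ by $(-)^\Delta$, then identifying the $\vartheta$-twisted $\Delta$-invariants with the branch $(e_{-m}M^\bullet)\{\vartheta\}$. The one minor difference is in that last step. The paper splits $M^\bullet$ into all branches $e_jM^\bullet$ and kills those with $j+m\not\equiv 0$ using that $\omega_\Delta^{j+m}(\delta)-1$ is a $p$-adic unit. You instead recognize the twisted averaging operator directly as $e_{-m}$, which is slightly cleaner, and you also make the injective-representative bookkeeping explicit.
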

The above proposition says that only the branch $-m$ modulo $p-1$ in the branch decomposition of $M^\bullet$ contributes to $\mathrm{CD}_m(M^\bullet)$.
\begin{proof}
    Applying Proposition \ref{prop: Hochschild_Serre} to the exact sequence
    \[
    1\longrightarrow \Delta\longrightarrow \Gamma\longrightarrow \Gamma_1\longrightarrow 1
    \]
    gives
    \[
    R\Gamma(\Gamma,M^\bullet\{\vartheta\})
    \simeq
    R\Gamma\bigl(\Gamma_1,R\Gamma(\Delta,M^\bullet\{\vartheta\})\bigr)
    \simeq R\Gamma\bigl(\Gamma_1,(M^\bullet\{\vartheta\})^\Delta\bigr),
    \]
    where the last isomorphism follows from Lemma \ref{lem: Delta_exact}.
    For $\delta\in \Delta$, since $\Delta$ is abelian, one has in
    $\Z_p[\Delta]$
    \[
    [\delta] e_j
    =\frac{1}{p-1}\sum_{\varepsilon\in \Delta}\omega_\Delta^{-j}(\varepsilon)[\delta\varepsilon]
    =\frac{1}{p-1}\sum_{\varepsilon'\in \Delta}\omega_\Delta^{-j}(\delta^{-1}\varepsilon')[\varepsilon']
    =\omega_\Delta^j(\delta)e_j.
    \]
    Hence if $x$ is a homogeneous element of a term of $e_jM^\bullet$, then
    \[
    \delta\cdot x
    =\delta\cdot (e_j\cdot x)
    =(\delta e_j)\cdot x
    =\omega_\Delta^j(\delta)e_j\cdot x
    =\omega_\Delta^j(\delta)\cdot x.
    \]
    Thus $\Delta$ acts by $\omega_\Delta^j$ on the branch $e_jM^\bullet$. After twisting by $\vartheta$, $\delta\in\Delta$ acts on $x\in (e_j M^\bullet)\{\vartheta\}$ by
    \[
    \vartheta(\delta)\cdot(\delta\cdot x)
    =\vartheta|_\Delta(\delta)\cdot \omega_\Delta^j(\delta)\cdot x
    =\omega_\Delta^j(\delta)\cdot \omega_\Delta^m(\delta)\cdot x
    =\omega_\Delta^{j+m}(\delta)\cdot x.
    \]
    Assume that $j+m\not\equiv0\pmod{p-1}$ so $\omega_\Delta^{j+m}$ is nontrivial. Choose $\delta\in \Delta$ such that $\omega_\Delta^{j+m}(\delta)\neq 1$. Then $\omega^{j+m}(\delta)-1\in \Z/p\Z$ is nonzero, hence invertible and lying on $\mu_{p-1}\subset\Z_p^\times$. For all $x\in\bigl((e_j M^\bullet)\{\vartheta\}\bigr)^\Delta$ and $\delta\in \Delta$,
    \[
    0=\delta\cdot x-x=\bigl(\omega^{j+m}(\delta)-1\bigr)\cdot x.
    \]
    Since $\omega^{j+m}(\delta)-1\in \Z_p^\times$, multiplication by $\omega^{j+m}(\delta)-1$ is an automorphism on the $p$-primary group, so $x=0$ and $\bigl((e_j M^\bullet)\{\vartheta\}\bigr)^\Delta=0$. On the other hand, assume that $j+m\equiv0\pmod{p-1}$. Then $\Delta$ acts trivially on $(e_j M^\bullet)\{\vartheta\}$, so we have $\bigl((e_j M^\bullet)\{\vartheta\}\bigr)^\Delta=(e_j M^\bullet)\{\vartheta\}$. Using the branch decomposition  of $M^\bullet$, the $\Delta$-invariant part is precisely
    \[
    (M^\bullet\{\vartheta\})^\Delta
    =\Bigr(\Bigl(\bigoplus_{j\in\Z/(p-1)\Z}e_j M^\bullet\Bigr)\{\vartheta\}\Bigr)^\Delta
    \simeq \bigoplus_{j\in\Z/(p-1)\Z}\bigr((e_j M^\bullet)\{\vartheta\}\bigr)^\Delta
    \simeq (e_{-m}M^\bullet)\{\vartheta\},
    \]
    where the second isomorphism arises from the fact that $(-)\{\vartheta\}$ and $(-)^\Delta$ commute with finite direct sums due to their exactness (see Lemma \ref{lem: Delta_exact} and the discussion following to Definition \ref{def: character_twist}).
    The canonical $\Gamma_1$-action is equal to the restriction of the twisted $\Gamma$-action on $M^\bullet\{m\}$. Hence
    \[
    R\Gamma(\Delta,M^\bullet\{\vartheta\})
    \simeq
    (M^\bullet\{\vartheta\})^\Delta
    \simeq
    (e_{-m}M^\bullet)\{\vartheta\}
    \]
    in $\dmodcat[\Gamma_1]$, and the result follows.
\end{proof}

\begin{cor}\label{cor: specialization}
    Let $\vartheta:\Gamma\to\Z_p^\times$ be a continuous character with $\vartheta|_\Delta =\omega_\Delta^m$. Then
    \[
    R\Gamma(G,A(\vartheta))
    \simeq
    R\Gamma\bigl(\Gamma_1,\,(e_{-m}\X(A))\{\vartheta\}\bigr).
    \]
    In particular, for any integer $m$,
    \[
    R\Gamma(G,A(m))
    \simeq
    R\Gamma\bigl(\Gamma_1,\,(e_{-m}\X(A))\{m\}\bigr).
    \]
\end{cor}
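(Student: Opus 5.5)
The corollary should follow by composing Theorem \ref{thm: cyclotomic_descent} with Proposition \ref{prop: branch_decomposition}; no new input is needed.

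\emph{First equivalence.} Let $\vartheta:\Gamma\to\Z_p^\times$ be continuous with $\vartheta|_\Delta=\omega_\Delta^m$. Theorem \ref{thm: cyclotomic_descent} gives a canonical equivalence
\[
R\Gamma(G,A(\vartheta))\simeq R\Gamma\bigl(\Gamma,\X(A)\{\vartheta\}\bigr)
\]
in $\dmodcat[]$. Since $\X(A)=R\Gamma(H,A)$ lies in $\dmodcat[\Gamma]$, I apply Proposition \ref{prop: branch_decomposition} with $M^\bullet=\X(A)$ and the same $\vartheta$. This yields
\[
R\Gamma\bigl(\Gamma,\X(A)\{\vartheta\}\bigr)\simeq R\Gamma\bigl(\Gamma_1,(e_{-m}\X(A))\{\vartheta\}\bigr).
\]
Here $(e_{-m}\X(A))\{\vartheta\}$ is regarded in $\dmodcat[\Gamma_1]$ by restricting the twisted $\Gamma$-action along $\Gamma_1\hookrightarrow\Gamma$. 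Composing the two equivalences gives the first claim.

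\emph{Integer case.} Take $\vartheta=\chi_\Gamma^m$ for an integer $m$. I must check that its restriction to $\Delta$ is $\omega_\Delta^m$, with $m$ read modulo $p-1$. From the notation section, $\chi_\Gamma|_\Delta=\omega_\Delta$, so $\chi_\Gamma^m|_\Delta=\omega_\Delta^m$. By definition $A(\chi_\Gamma^m)=A(m)$ and $\{\chi_\Gamma^m\}=\{m\}$, so the general statement specializes to the displayed formula. Alternatively, one can cite the ``in particular'' parts of both earlier results directly, using $\mathrm{CD}_m(\X(A))$ as the intermediate object.

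\emph{Expected obstacle.} The only point needing care is bookkeeping. The branch index is $-m$ understood modulo $p-1$, consistent with the convention for the $e_j$. Also, the $\Gamma_1$-structure on the right-hand side must be the restricted twisted action, exactly as in Proposition \ref{prop: branch_decomposition}, so that the equivalences compose in $\dmodcat[]$.
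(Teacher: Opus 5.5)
Your proposal is correct and matches the paper's proof: the paper also composes Theorem~\ref{thm: cyclotomic_descent} with Proposition~\ref{prop: branch_decomposition} applied to $M^\bullet=\X(A)$. Your explicit check that $\chi_\Gamma^m|_\Delta=\omega_\Delta^m$ for the integer case is a step the paper leaves implicit.
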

Here again, the twisted complexes on the right are viewed as objects of $D^+\bigl(\modcat[\Gamma_1]\bigr)$
by restriction of the twisted $\Gamma$-action along $\Gamma_1\hookrightarrow \Gamma$.
\begin{proof}
    By Theorem \ref{thm: cyclotomic_descent} and Proposition \ref{prop: branch_decomposition}, we have
    \[
    R\Gamma(G,A(\vartheta))
    \simeq R\Gamma\bigl(\Gamma, \X(A)\{\vartheta\}\bigr)\
    \simeq
    R\Gamma\bigl(\Gamma_1,\,(e_{-m}\X(A))\{\vartheta\}\bigr).
    \]
\end{proof}

\begin{rmk}\label{rmk: coh_dim_Gamma_1}
    By \cite[Corollary 3.5.16 and Proposition 3.5.17]{nsw}, we have $cd_p(\Gamma_1)=cd_p(\Z_p)=1$.
\end{rmk}

\begin{rmk}[cf. {\cite[Proposition 5.2.7]{nsw}}]\label{rmk: iwasawa_EXT}
    Let $K$ be a profinite group and $\Lambda(K)$ its Iwasawa algebra. Then we can express the group cohomology of $M$ using $\Ext$ functor:
    We have a canonical isomorphism
    \[
    H^i(K, M)\cong\Ext^i_{\Lambda(K)}(\Z_p, M)
    \]
    in $\modcat[]$.
\end{rmk}

\begin{lem}[Cohomology of $\Gamma_1$ on a single module]
    Let $M\in \modcat[\Gamma_1]$ and view $M$ as a discrete $\Lambda(\Gamma_1)$-module. Then there is a natural short exact sequence
    \[
    0\longrightarrow M^{\Gamma_1}\longrightarrow M
    \xrightarrow{\gamma-1} M
    \xrightarrow{\;\partial\;} H^1(\Gamma_1, M)\longrightarrow 0,
    \]
    and
    \[
    H^q(\Gamma_1,M)=0
    \qquad (q\ge 2).
    \]
\end{lem}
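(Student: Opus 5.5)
The plan is to compute $H^\ast(\Gamma_1,M)$ by a length-one resolution of $\Z_p$ over $\Lambda=\Lambda(\Gamma_1)$ and then apply Remark \ref{rmk: iwasawa_EXT}. Because $\Gamma_1\cong\Z_p$ is topologically generated by $\gamma$, we have $\Lambda\cong\Z_p[[T]]$ with $T=\gamma-1$. In these coordinates there is a short exact sequence of compact $\Lambda$-modules
\[
0\longrightarrow \Lambda\xrightarrow{\;\gamma-1\;}\Lambda\xrightarrow{\;\varepsilon\;}\Z_p\longrightarrow 0,
\]
where $\varepsilon$ is the augmentation. Exactness holds because $\Lambda$ is a domain and $\Lambda/(\gamma-1)\cong\Z_p$.

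First I would check that $\Lambda$ is projective in the category of compact $\Lambda$-modules used in \cite[Proposition 5.2.7]{nsw}. Then this sequence is a projective resolution of $\Z_p$ of length one. Applying $\Hom_\Lambda(-,M)$ with continuous homs, and using that continuous homs $\Lambda\to M$ are determined by the image of $1$, so that $\Hom_\Lambda(\Lambda,M)\cong M$, the complex computing $\Ext^\ast_\Lambda(\Z_p,M)$ becomes
\[
M\xrightarrow{\;\gamma-1\;}M
\]
in degrees $0,1$.

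With Remark \ref{rmk: iwasawa_EXT}, this gives $H^0(\Gamma_1,M)=\ker(\gamma-1)$ and $H^1(\Gamma_1,M)=\coker(\gamma-1)$. We also get $H^q(\Gamma_1,M)=0$ for $q\ge2$, which also follows directly from $cd_p(\Gamma_1)=1$ (Remark \ref{rmk: coh_dim_Gamma_1}), since $M$ is $p$-primary. I would check $\ker(\gamma-1)=M^{\Gamma_1}$ separately: an element fixed by $\gamma$ is fixed by the closed subgroup topologically generated by $\gamma$, because stabilizers are open and hence closed. The map $\partial$ is then the projection $M\to\coker(\gamma-1)\cong H^1$. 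Concretely, $\partial(y)$ is the class of the unique continuous crossed homomorphism $c$ with $c(\gamma)=y$, so the four-term sequence is exact. Naturality in $M$ follows because the resolution does not depend on $M$.

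I expect the main obstacle to be justifying $H^1(\Gamma_1,M)=M/(\gamma-1)M$ rigorously, that is, the projectivity and continuity issues hidden in the $\Ext$ identification. If I want to avoid them, I would argue directly. Write $M=\varinjlim M_\alpha$ over finite submodules and use that cohomology commutes with direct limits. For a finite $M$, the action factors through $\Gamma_1/\Gamma_1^{p^n}$, which is cyclic, and a continuous cocycle is determined by $c(\gamma)$. The condition for extending to a continuous cocycle on $\Z_p$ is that $N_{p^n}c(\gamma)$ is eventually compatible, and this is automatic in the limit. I would also take care that $(\gamma-1)$ is topologically nilpotent on each finite $p$-group $M_\alpha$, which is what makes every $y\in M$ arise as $c(\gamma)$ for a continuous cocycle $c$.
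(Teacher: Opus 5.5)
Your proposal is correct and follows essentially the same route as the paper. Both arguments use the free resolution $0\to\Lambda\xrightarrow{\gamma-1}\Lambda\to\Z_p\to0$, the identifications $\Hom_\Lambda(\Lambda,M)\cong M$ and $\Hom_\Lambda(\Z_p,M)\cong M^{\Gamma_1}$, and the comparison $H^i(\Gamma_1,M)\cong\Ext^i_\Lambda(\Z_p,M)$ from Remark \ref{rmk: iwasawa_EXT}, with vanishing in degrees $\ge 2$ coming from $cd_p(\Gamma_1)=1$. The only differences are cosmetic: the paper works through the long exact $\Ext$-sequence and $\Ext^1_\Lambda(\Lambda,M)=0$ where you compute $\Ext$ directly from the resolution, and your fallback argument via finite submodules and explicit cocycles is not needed.
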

\begin{proof}
    Consider the following short exact sequence:
    \[
    0\longrightarrow\Lambda(\Gamma_1)\xrightarrow{\gamma-1}\Lambda(\Gamma_1)\xrightarrow{\pi}\Z_p\longrightarrow 0,
    \]
    where $\pi$ is the augmentation map.
    We shall show the sequence is exact. First, the map $\pi$ is clearly surjective. 
    Using the canonical identification $\Lambda(\Gamma_1)\cong\Z_p[T]$ via $\gamma-1\mapsto T$, the map $\gamma-1:\Lambda(\Gamma_1)\to\Lambda(\Gamma_1)$ can be understood by mapping $f(T)\mapsto Tf(T)$, which is obviously injective.
    Consider the usual augmentation ideal $\ker(\pi)\subset\Lambda(\Gamma_1)$, which is generated by $\gamma'-1$ for $\gamma'\in \Gamma_1$. 
    Under the identification, the latter ideal corresponds to the principal ideal $(T)\subset\Z_p[T]$, which is precisely the image of $f(T)\mapsto Tf(T)$.

    \medskip
    
    Applying $\Ext^i_{\Lambda(\Gamma_1)}(-, M)$ to the above short exact sequence, we have a long exact sequence
    \[
    0\longrightarrow \Hom_{\Lambda(\Gamma_1)}(\Z_p, M)\longrightarrow\Hom_{\Lambda(\Gamma_1)}(\Lambda(\Gamma_1), M)\longrightarrow\Hom_{\Lambda(\Gamma_1)}(\Lambda(\Gamma_1), M)
    \]
    \[
    \longrightarrow\Ext^1_{\Lambda(\Gamma_1)}(\Z_p, M)\longrightarrow\Ext^1_{\Lambda(\Gamma_1)}(\Lambda(\Gamma_1), M)\longrightarrow\Ext^1_{\Lambda(\Gamma_1)}(\Lambda(\Gamma_1), M)\longrightarrow0
    \]
    Because $\Gamma_1$ has cohomological $p$-dimension 1 (cf. Remark \ref{rmk: coh_dim_Gamma_1}), the $\Ext^i$ terms in the long exact sequence vanish after $\Ext^1$.
    Since $\Lambda(\Gamma_1)$ is a free $\Lambda(\Gamma_1)$-module, $\Ext^1_{\Lambda(\Gamma_1)}(\Lambda(\Gamma_1), M)$ vanishes.
    Using Remark \ref{rmk: iwasawa_EXT}, we identify $\Ext^1_{\Lambda(\Gamma_1)}(\Z_p, M)$ with $H^1(\Gamma_1, M)$.
    Then the latter sequence becomes
     \[
    0\longrightarrow \Hom_{\Lambda(\Gamma_1)}(\Z_p, M)\longrightarrow\Hom_{\Lambda(\Gamma_1)}(\Lambda(\Gamma_1), M)\longrightarrow\Hom_{\Lambda(\Gamma_1)}(\Lambda(\Gamma_1), M)\longrightarrow H^1(\Gamma_1, M)\longrightarrow0
    \]
    It remains to identify the $\Hom$ sets and the maps between them with the ones arising in the assertion.

\medskip

    Note that $1\in \Z_p$ and $1\in \Lambda(\Gamma_1)$ are generators of each module with respect to $\Lambda(\Gamma_1)$-action.
    Consider $f\in \Hom_{\Lambda(\Gamma_1)}(\Z_p, M)$ and let $f(1)=x\in M$. Since $\Z_p$ is endowed with the trivial $\Gamma_1$-action and $f$ is $\Gamma_1$-equivariant, $x$ is fixed by $\Gamma_1$. Conversely, an element $x\in M^{\Gamma_1}$ determines a unique $f\in\Hom_{\Lambda(\Gamma_1)}(\Z_p, M)$. Therefore we have an isomorphism $\Hom_{\Lambda(\Gamma_1)}(\Z_p, M)\cong M^{\Gamma_1}$ given by $f\mapsto f(1)$. Doing similarly, we have $\Hom_{\Lambda(\Gamma_1)}(\Lambda(\Gamma_1), M)\cong M$. Indeed, $f\in\Hom_{\Lambda(\Gamma_1)}(\Lambda(\Gamma_1), M)$ is uniquely determined by $f(1)=x\in M$, where $x$ can be taken freely.

\medskip
    
    Finally, let $f\in\Hom_{\Lambda(\Gamma_1)}(\Lambda(\Gamma_1), M)$ and $x=f(1)\in M$. Applying $\Lambda(\Gamma_1)\xrightarrow{\gamma-1}\Lambda(\Gamma_1)$, we have $\gamma\cdot x-x\in M$ and this is equal to $(f\circ(\gamma-1))(1)=f((\gamma-1)\cdot 1)=f(\gamma-1)=\gamma\cdot f(1)-f(1)$. Therefore the map $\Lambda(\Gamma_1)\xrightarrow{\gamma-1}\Lambda(\Gamma_1)$ corresponds to $M\xrightarrow{\gamma-1}M$.
    Let $g\in \Hom_{\Lambda(G_1)}(\Z_p, M)$ and let $y=g(1)$. Composing $g$ with the augmentation map, $g$ yields $g\circ\pi\in\Hom_{\Lambda(\Gamma_1)}(\Lambda(\Gamma_1), M)$, corresponding to $g\circ\pi(1)=g(1)=y$. Thus $\pi$ induces the canonical inclusion $M^{\Gamma_1}\hookrightarrow M$. The vanishing $H^q(\Gamma_1, M)=0$ for $q>1$ also follows immediately from Remark \ref{rmk: coh_dim_Gamma_1}.
\end{proof}

\begin{lem}\label{lem: injective_short_ES}
    Let $I^\bullet$ be a bounded-below complex of injective objects in $\modcat[\Gamma_1]$. Then there is a short exact sequence
    \[
    0\longrightarrow (I^\bullet)^{\Gamma_1}\longrightarrow I^\bullet
    \xrightarrow{\gamma-1} I^\bullet
    \longrightarrow 0.
    \]
    in $\chaincat[\Gamma_1]$.
\end{lem}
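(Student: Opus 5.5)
The plan is to reduce the statement to a degreewise claim and then apply the preceding lemma (cohomology of $\Gamma_1$ on a single module) to each term $I^q$. Since the maps $(I^\bullet)^{\Gamma_1}\hookrightarrow I^\bullet$ and $\gamma-1:I^\bullet\to I^\bullet$ are given termwise by the inclusion of invariants and by the action of $\gamma-1$, they commute with the differentials. The differentials are $\Gamma_1$-equivariant, so they preserve invariants and commute with $\gamma$. Hence both maps are morphisms of complexes, and each is $\Gamma_1$-equivariant because $\Gamma_1$ is abelian. The composite is zero, since $\gamma-1$ kills invariants.

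Exactness of a sequence of complexes can be checked degreewise. Fix $q$ and apply the preceding lemma to the single module $M=I^q$. This gives the four-term exact sequence
\[
0\longrightarrow (I^q)^{\Gamma_1}\longrightarrow I^q\xrightarrow{\gamma-1} I^q\longrightarrow H^1(\Gamma_1,I^q)\longrightarrow 0 .
\]
It therefore suffices to show $H^1(\Gamma_1,I^q)=0$. This holds because $I^q$ is injective in $\modcat[\Gamma_1]$, and $H^1(\Gamma_1,-)$ is the first right derived functor of $(-)^{\Gamma_1}$ on this category. By Remark \ref{rmk: iwasawa_EXT}, this is also the same as $\Ext^1_{\Lambda(\Gamma_1)}(\Z_p,-)$, and derived functors vanish on injective objects. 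Thus $\gamma-1$ is surjective on each $I^q$, so the sequence is short exact in every degree.

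The only point needing care is consistency between the two descriptions of $H^1$. The lemma identifies the cokernel of $\gamma-1$ with $\Ext^1_{\Lambda(\Gamma_1)}(\Z_p,M)$, whereas injectivity of $I^q$ is injectivity in the category of discrete $p$-primary $\Gamma_1$-modules. I would invoke Remark \ref{rmk: iwasawa_EXT} together with the remark after the definition (continuous cohomology agrees with the right derived functors of invariants) to conclude $H^1(\Gamma_1,I^q)=R^1(-)^{\Gamma_1}(I^q)=0$. If one prefers to avoid the $\Ext$ identification, there is a direct alternative. Take $x\in I^q$, killed by $p^n$ and fixed by an open $\Gamma_1^{p^k}$. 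Show that $x$ lies in the image of $\gamma-1$ by noting that $\operatorname{coker}(\gamma-1)$ on the finite module $\Z/p^n[\Gamma_1/\Gamma_1^{p^k}]$-submodule generated by $x$ is $H^1$ of a finite cyclic group. Then use that injectives are acyclic for restriction-compatible cohomology. I expect the derived-functor argument to suffice, and this identification is the main (mild) obstacle.

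Finally, all three terms lie in $\chaincat[\Gamma_1]$. Each is bounded below because $I^\bullet$ is. Invariants of a discrete $p$-primary module are again discrete and $p$-primary, so the first term also lies in this category, and the statement follows.
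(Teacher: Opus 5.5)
Your proof is correct, but it takes a shorter route than the paper.

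You get surjectivity of $\gamma-1$ on each $I^q$ directly from the preceding lemma, which identifies $\coker(\gamma-1:M\to M)$ with $H^1(\Gamma_1,M)$. You then note that $H^1(\Gamma_1,I^q)=R^1(-)^{\Gamma_1}(I^q)=0$ because $I^q$ is injective in $\modcat[\Gamma_1]$. There is no circularity, since that lemma is proved independently from the resolution $0\to\Lambda\xrightarrow{\gamma-1}\Lambda\to\Z_p\to 0$.

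The paper does not use that lemma at all; it argues elementwise at finite level:
\begin{enumerate}
\item For $x\in I^q$ it picks $m\ge d+s$ so that $x\in (I^q)^{U_m}$ and $\mathrm{N}_m(x)=0$.
\item Proposition \ref{prop: inflation_adjunction} makes $(I^q)^{U_m}$ injective over the finite cyclic group $G_m=\Gamma_1/U_m$, hence $H^1(G_m,(I^q)^{U_m})=0$.
\item The cyclic-group formula $H^1=\ker \mathrm{N}_m/(\bar\gamma_m-1)$ then places $x$ in $(\gamma-1)I^q$.
\end{enumerate}

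Your version is more economical, but it rests entirely on the $\Ext_{\Lambda(\Gamma_1)}$-description of $H^1$ (Remark \ref{rmk: iwasawa_EXT}) agreeing with the derived-functor definition. That is precisely the compatibility you flag. The paper's version needs only finite cyclic group cohomology and the fact that invariants preserve injectives, so it sidesteps the profinite $\Ext$ machinery at the cost of some bookkeeping with norms.

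Your fallback sketch has a slip. For a finite cyclic group, the cokernel of $\gamma-1$ is the coinvariants, not $H^1$. What you actually need is $\mathrm{N}_m(x)=0$, which the paper arranges by taking $m\ge d+s$. You must also work in $(I^q)^{U_m}$, which is injective over $G_m$, rather than in the finite submodule generated by $x$, which need not be acyclic. Since your main argument does not depend on this sketch, it is not a gap.
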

\begin{proof}
    Fix a degree $q$ and consider an element $x\in I^q$. Choose integers $d,s\ge 0$ such that
    \[
    \gamma^{p^d}x=x,
    \qquad
    p^s x=0,
    \]
    and take $m\ge d+s$. It is possible because $I^q$ is $p$-primary and $\Stab_{I^q}(x)\subset \Gamma_1$ is contained in certain basic open subgroup $\langle\gamma^{p^i}\rangle\subset \Gamma_1$. Then $x\in (I^q)^{U_m}$ and $\mathrm N_m(x)=0$. By Proposition \ref{prop: inflation_adjunction}, $(I^q)^{U_m}$ is injective in $\modcat[G_m]$. Since group cohomology is the right derived functor of invariants, this implies
    \[
    H^1\bigl(G_m,(I^q)^{U_m}\bigr)=0.
    \]
    Using \cite[Proposition 1.7.1]{nsw}, this yields $\ker(\mathrm N_m)=(\overline \gamma_m-1)(I^q)^{U_m}$ and $x\in \ker(\mathrm N_m)=(\bar\gamma_m-1)(I^q)^{U_m}=(\gamma-1)(I^q)^{U_m}\subset (\gamma-1)I^q$. Hence the map $\gamma-1:I^q\to I^q$ is surjective. For every degree $q$, there is a short exact sequence
    \[
    0\longrightarrow (I^q)^{\Gamma_1}\longrightarrow I^q
    \xrightarrow{\gamma-1} I^q
    \longrightarrow 0,
    \]
    and these assemble into a short exact sequence of complexes
    \[
    0\longrightarrow (I^\bullet)^{\Gamma_1}\longrightarrow I^\bullet
    \xrightarrow{\gamma-1} I^\bullet
    \longrightarrow 0,
    \]
    because $\gamma-1$ commutes with boundary maps of the complex, which are $\Gamma_1$-equivariant.
\end{proof}

\begin{prop}\label{prop: Gamma_1_description}
    Let $N^\bullet\in \dmodcat[\Gamma_1]$. Then
    \[
    R\Gamma(\Gamma_1,N^\bullet)
    \simeq
    \fib\bigl(\gamma-1:N^\bullet\to N^\bullet\bigr)
    \simeq
    \cofib\bigl(\gamma-1:N^\bullet\to N^\bullet\bigr)[-1]
    \]
    in $\dmodcat[]$, where on the right $\gamma-1$ denotes the endomorphism of the underlying object obtained from the $\Gamma_1$-action and then forgetting the $\Gamma_1$-action. If $N$ is a single discrete $\Gamma_1$-module placed in degree $0$, this is represented by the two-term complex
    \[
    [N\xrightarrow{\gamma-1}N]
    \]
    in cohomological degrees $0$ and $1$.
\end{prop}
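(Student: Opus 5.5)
We will represent $N^\bullet$ by a bounded-below complex $I^\bullet$ of injective objects in $\modcat[\Gamma_1]$, so that $R\Gamma(\Gamma_1,N^\bullet)$ is represented by the complex $(I^\bullet)^{\Gamma_1}$. The key input is Lemma \ref{lem: injective_short_ES}, which gives a short exact sequence of complexes
\[
0\longrightarrow (I^\bullet)^{\Gamma_1}\longrightarrow I^\bullet\xrightarrow{\gamma-1} I^\bullet\longrightarrow 0
\]
after forgetting the $\Gamma_1$-action. Since the forgetful functor $\dmodcat[\Gamma_1]\to\dmodcat[]$ is exact, the underlying object of $N^\bullet$ is represented by $I^\bullet$ viewed as a complex of discrete $p$-primary groups. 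The endomorphism $\gamma-1$ of $N^\bullet$ in $\dmodcat[]$ is then represented by the honest chain map $\gamma-1:I^\bullet\to I^\bullet$. Here we should remark that $\gamma-1$ is a morphism of complexes because the differentials are $\Gamma_1$-equivariant. It is even $\Gamma_1$-equivariant, since $\Gamma_1$ is abelian, though we only need it after forgetting.

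Next we use the standard fact that a degreewise short exact sequence of bounded-below complexes gives a fiber sequence in the derived $\infty$-category. Concretely, one compares $(I^\bullet)^{\Gamma_1}$ with the mapping cocone $\operatorname{Cone}(\gamma-1)[-1]$. The natural map $(I^\bullet)^{\Gamma_1}\to\operatorname{Cone}(\gamma-1)[-1]$, sending $x\mapsto(x,0)$, is a quasi-isomorphism: its cokernel is the cocone of the surjection $I^\bullet\to I^\bullet/(I^\bullet)^{\Gamma_1}\cong I^\bullet$, which is acyclic. In the stable $\infty$-category $\dmodcat[]$, the cocone of a chain map computes $\fib$, since fibers in the dg nerve are computed by cocones (cf.\ \cite[\S1.3.2]{ha}). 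This yields
\[
R\Gamma(\Gamma_1,N^\bullet)\simeq\fib(\gamma-1:N^\bullet\to N^\bullet).
\]
The identification $\fib\simeq\cofib[-1]$ was recorded earlier. One subtlety is that $\dmodcat[]$ is modeled on injective complexes while $\operatorname{Cone}(\gamma-1)[-1]$ is built from $I^\bullet$, whose terms need not be injective as plain groups. We can pass to the derived category via the canonical functor from bounded-below complexes, which inverts quasi-isomorphisms and is exact. This step, making the cone-equals-fiber identification rigorous in the chosen model, is where I expect the only real friction. I would handle it by citing that the localization functor $\mathrm{Ch}^+\to D^+$ carries degreewise split-in-each-degree cones to cofiber sequences.

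For a single module $N$ in degree $0$, the cocone of $\gamma-1:N\to N$, with both $N$ in degree $0$, is literally the two-term complex $[N\xrightarrow{\gamma-1}N]$ in degrees $0$ and $1$, up to a sign convention on the differential. Alternatively, it follows from the previous lemma computing $H^q(\Gamma_1,N)$: its cohomology is $N^{\Gamma_1}$ in degree $0$ and $\coker(\gamma-1)\cong H^1(\Gamma_1,N)$ in degree $1$. This is consistent with, and reproves, that lemma. Naturality in $N^\bullet$ follows because injective resolutions are functorial up to homotopy and all constructions are functorial in chain maps.
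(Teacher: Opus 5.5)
Your proposal is correct and is essentially the paper's proof. Both arguments resolve $N^\bullet$ by a bounded-below injective complex $I^\bullet$, use Lemma~\ref{lem: injective_short_ES} to get $0\to (I^\bullet)^{\Gamma_1}\to I^\bullet\xrightarrow{\gamma-1} I^\bullet\to 0$, read this as a fiber sequence, and transport it along the quasi-isomorphism $N^\bullet\to I^\bullet$; the single-module case is the literal cocone $[N\xrightarrow{\gamma-1}N]$.

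You make explicit what the paper takes for granted, namely the quasi-isomorphism $x\mapsto (x,0)$ into the cocone; its cokernel is, more precisely, the cocone of the isomorphism $I^\bullet/(I^\bullet)^{\Gamma_1}\xrightarrow{\sim} I^\bullet$ induced by $\gamma-1$. The model-theoretic friction you anticipate does not actually arise: injectives in $\modcat[\Gamma_1]$ are direct summands of coinduced modules of divisible groups, hence divisible, and therefore already injective in $\modcat[]$.
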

\begin{proof}
    Choose an injective resolution $N^\bullet\longrightarrow I^\bullet$ in $\chaincat[\Gamma_1]$, so we have $R\Gamma(\Gamma_1,N^\bullet)\simeq (I^\bullet)^{\Gamma_1}$. By Lemma \ref{lem: injective_short_ES}, there is a short exact sequence
    \[
    0\longrightarrow (I^\bullet)^{\Gamma_1}\longrightarrow I^\bullet
    \xrightarrow{\gamma-1} I^\bullet\longrightarrow 0.
    \]
    Therefore
    \[
    (I^\bullet)^{\Gamma_1}
    \simeq
    \fib\bigl(\gamma-1:I^\bullet\to I^\bullet\bigr)
    \simeq
    \cofib\bigl(\gamma-1:I^\bullet\to I^\bullet\bigr)[-1].
    \]
    Because fiber and cofiber are exact constructions in the stable $\infty$-category $\dmodcat[]$, the quasi-isomorphism $N^\bullet\to I^\bullet$ induces equivalence
    \[
    \fib\bigl(\gamma-1:N^\bullet\to N^\bullet\bigr)
    \simeq
    \fib\bigl(\gamma-1:I^\bullet\to I^\bullet\bigr).
    \]
    If $N$ is concentrated in degree $0$, then the mapping fiber of
    $\gamma-1:N\to N$ is represented by the two-term complex
    \[
    [N\xrightarrow{\gamma-1}N]
    \]
    in cohomological degrees $0$ and $1$.
\end{proof}

Recall that we choose $\gamma$ to be a topological generator of $\Gamma_1$ so that $\chi_\Gamma(\gamma)=\langle \gamma\rangle=u\in 1+p\Z_p$. All fiber and cofiber descriptions below are taken with respect to this fixed choice of $\gamma$. Since $\Gamma=\Delta\times \Gamma_1$ is abelian, the action of $\gamma\in \Gamma_1$ commutes with the idempotents $e_j$. Hence for every $j\in \Z/(p-1)\Z$, after restricting from $\Gamma$ to $\Gamma_1$ and then forgetting the $\Gamma_1$-action, the action of $\gamma$ induces an endomorphism
\[
\gamma:e_j\X(A)\longrightarrow e_j\X(A)
\]
in $\dmodcat[]$.

\begin{defn}[Cyclotomic cone]
    View $e_j\X(A)$ first as an object of $\dmodcat[\Gamma_1]$ by restricting along $\Gamma_1\hookrightarrow \Gamma$, and then as an object of $\dmodcat[]$ by forgetting the $\Gamma_1$-action. Since $\Gamma=\Delta\times\Gamma_1$ is abelian, the action of $\gamma$ preserves each branch $e_i$, so the action of $\gamma$ induces an endomorphism
    \[
    \gamma:e_j\X(A)\longrightarrow e_j\X(A)
    \]
    in $\dmodcat[]$. Let $\vartheta:\Gamma\to\Z_p^\times$ be a continuous character with $\vartheta|_\Delta=\omega_\Delta^m$ for some $m\in \Z/(p-1)\Z$. Then, multiplication by the scalar $\vartheta(\gamma)\in \Z_p^\times$ likewise defines an endomorphism of the same underlying object. Relative to the fixed topological generator $\gamma$, define
    \begin{align*}
        \mathbb{C}_\vartheta(A)
        :&=
        \fib\Bigl(
        \gamma-\vartheta(\gamma):
        e_{m}\X(A)\longrightarrow e_j\X(A)
        \Bigr)\\
        &\simeq \cofib\Bigl(
        \gamma-\vartheta(\gamma):
        e_{m}\X(A)\longrightarrow e_j\X(A)
        \Bigr)[-1].
    \end{align*}
\end{defn}
If $I^\bullet$ is a bounded-below complex of injective objects in $\modcat[]$ representing the underlying object of $e_j\X(A)$, and if $\widetilde f:I^\bullet\longrightarrow I^\bullet$ is a chain map representing the endomorphism $\gamma-\vartheta(\gamma)$ in $\dmodcat[]$, then the above object is represented by the usual mapping-fiber complex
\[
\fib(\widetilde f)^q = I^q\oplus I^{q-1},
\qquad
d_{\fib(\widetilde f)}(x,y)=(-d_Ix,\; \widetilde f(x)+d_Iy).
\]

\begin{thm}\label{thm: cone_identification}
    Let $A$ be a discrete $G$-module with the trivial action and let $\vartheta:\Gamma\to\Z_p^\times$ be a continuous character with $\vartheta|_\Delta=\omega_\Delta^m$ for some $m\in \Z/(p-1)\Z$. Then there is an equivalence
    \[
    R\Gamma(G,A(\vartheta))\simeq \fib\Bigl(
    \gamma-\vartheta^{-1}(\gamma):
    e_{-m}\X(A)\longrightarrow e_{-m}\X(A)
    \Bigr)=\C_{\vartheta^{-1}}(A)
    \]
    in $\dmodcat[]$.
\end{thm}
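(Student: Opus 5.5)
The plan is to chain together three results already established: Corollary \ref{cor: specialization}, Proposition \ref{prop: Gamma_1_description}, and a rescaling of the endomorphism by a unit. By Corollary \ref{cor: specialization} we have
\[
R\Gamma(G,A(\vartheta))\simeq R\Gamma\bigl(\Gamma_1,(e_{-m}\X(A))\{\vartheta\}\bigr),
\]
where the twisted object is regarded in $\dmodcat[\Gamma_1]$ by restriction along $\Gamma_1\hookrightarrow\Gamma$. Applying Proposition \ref{prop: Gamma_1_description} with $N^\bullet=(e_{-m}\X(A))\{\vartheta\}$ then gives
\[
R\Gamma(G,A(\vartheta))\simeq\fib\bigl(\gamma_{\vartheta}-1:(e_{-m}\X(A))\{\vartheta\}\to(e_{-m}\X(A))\{\vartheta\}\bigr).
\]
Here $\gamma_\vartheta$ denotes the action of $\gamma$ for the \emph{twisted} structure, viewed as an endomorphism in $\dmodcat[]$ after forgetting the action.

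The next step is to compare $\gamma_\vartheta$ with the untwisted $\gamma$ on $e_{-m}\X(A)$. I would fix a bounded-below injective complex $I^\bullet$ in $\modcat[\Gamma]$ representing $\X(A)$. Since $e_{-m}$ and $(-)\{\vartheta\}$ are both computed degreewise on this injective model and preserve injectives, $(e_{-m}I^\bullet)\{\vartheta\}$ has literally the same underlying complex as $e_{-m}I^\bullet$. On this complex $\gamma$ acts by $\rho_{\{\vartheta\}}(\gamma)=\vartheta(\gamma)\,\rho(\gamma)$, by Definition \ref{def: weight_twist}. Hence, at the level of chain maps on one complex, we get the identity
\[
\gamma_\vartheta-1=\vartheta(\gamma)\bigl(\gamma-\vartheta(\gamma)^{-1}\bigr),
\]
and $\vartheta(\gamma)^{-1}=\vartheta^{-1}(\gamma)$.

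Finally, I would use that multiplication by the unit $\vartheta(\gamma)\in\Z_p^\times$ is an automorphism $\alpha$ of the underlying object, since every $p$-primary group is a $\Z_p$-module. For any $f$, the commutative square with the identity on the source and $\alpha$ on the target shows $\fib(\alpha\circ f)\simeq\fib(f)$. Applying this with $f=\gamma-\vartheta^{-1}(\gamma)$ yields
\[
R\Gamma(G,A(\vartheta))\simeq\fib\bigl(\gamma-\vartheta^{-1}(\gamma):e_{-m}\X(A)\to e_{-m}\X(A)\bigr).
\]
Since $\vartheta^{-1}|_\Delta=\omega_\Delta^{-m}$, this is $\C_{\vartheta^{-1}}(A)$, read with branch index $-m$ in the definition of the cyclotomic cone.

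The main obstacle is the middle step: making rigorous that the endomorphism in $\dmodcat[]$ induced by the twisted $\Gamma_1$-action equals $\vartheta(\gamma)$ times the untwisted one. Working in homotopy categories alone is not enough here, because fibers must be taken of specific morphisms. I would handle it by staying on the single injective model throughout, so that the forgetful functor sends both endomorphisms to honest chain maps on the same complex, where the identity above holds strictly.
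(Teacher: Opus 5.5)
Your proposal is correct and follows the paper's proof essentially step for step. You apply Corollary~\ref{cor: specialization}, then Proposition~\ref{prop: Gamma_1_description}, then identify the twisted action of $\gamma$ as $\vartheta(\gamma)\gamma$ and absorb the unit $\vartheta(\gamma)$ using a commutative square with an automorphism on the target. Two refinements go slightly beyond what the paper writes: you fix a single injective model so that the rescaling identity holds strictly at the chain level, and you note that the cone must be read with branch index $-m$, since the paper's definition of $\C_\vartheta(A)$ has a stray $e_j$.
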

\begin{proof}
    By Corollary \ref{cor: specialization} and Proposition \ref{prop: Gamma_1_description},
    \[
    R\Gamma(G,A(\vartheta))
    \simeq
    R\Gamma\bigl(\Gamma_1,\,(e_{-m}\X(A))\{\vartheta\}\bigr)
    \simeq \fib\Bigl(
    \gamma-1:
    (e_{-m}\X(A))\{\vartheta\}
    \longrightarrow
    (e_{-m}\X(A))\{\vartheta\}
    \Bigr).
    \]
    On $(e_{-m}\X(A))\{\vartheta\}$, the action of $\gamma$ is $\vartheta(\gamma)\gamma$. Hence
    \[
    R\Gamma(G,A(\vartheta))
    \simeq
    \fib\Bigl(
    \vartheta(\gamma)\gamma-1:
    e_{-m}\X(A)\longrightarrow e_{-m}\X(A)
    \Bigr).
    \]
    Multiplication by the unit $\vartheta(\gamma)$ is an automorphism of
    $e_{-m}\X(A)$, and the square
    \[
    \begin{CD}
    e_{-m}\X_S @>{\vartheta(\gamma)\gamma-1}>>
    e_{-m}\X_S \\
    @| @V{\times \vartheta^{-1}(\gamma)}V{\sim}V \\
    e_{-m}\X_S @>{\gamma-\vartheta^{-1}(\gamma)}>>
    e_{-m}\X_S
    \end{CD}
    \]
    commutes. Since postcomposition by an equivalence does not change the fiber in a stable $\infty$-category, one gets
    \[
    \fib\bigl(\vartheta(\gamma)\gamma-1\bigr)
    \simeq \fib\bigl(\gamma-\vartheta^{-1}(\gamma)\bigr).
    \]
    Thus
    \[
    R\Gamma(G,A(\vartheta))
    \simeq \fib\bigl(\gamma-\vartheta^{-1}(\gamma)\bigr)=\C_{\vartheta^{-1}}(A).
    \]
\end{proof}

\begin{prop}\label{prop: short_ES}
    For every $q\geq 0$, there is a short exact sequence
    \[
    0\longrightarrow
    \coker\Bigl(
    \gamma-\vartheta^{-1}(\gamma):
    e_{-m}H^{q-1}(H,A)\longrightarrow e_{-m}H^{q-1}(H,A)
    \Bigr)
    \longrightarrow
    H^q(G,A(\vartheta))
    \]
    \[
    \longrightarrow
    \ker\Bigl(
    \gamma-\vartheta^{-1}(\gamma):
    e_{-m}H^q(H,A)\longrightarrow e_{-m}H^q(H,A)
    \Bigr)
    \longrightarrow 0.
    \]
\end{prop}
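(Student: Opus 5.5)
The plan is to take the long exact cohomology sequence of the fiber sequence supplied by Theorem \ref{thm: cone_identification}. Write $X:=e_{-m}\X(A)$, regarded as an object of $\dmodcat[]$, and write $f:=\gamma-\vartheta^{-1}(\gamma):X\to X$. Theorem \ref{thm: cone_identification} gives $R\Gamma(G,A(\vartheta))\simeq\fib(f)$, so there is a fiber sequence
\[
\fib(f)\longrightarrow X\xrightarrow{f} X
\]
in the stable $\infty$-category $\dmodcat[]$. The fiber sequence induces a long exact sequence on cohomology objects:
\[
\cdots\to h^{q-1}(X)\xrightarrow{h^{q-1}(f)} h^{q-1}(X)\to h^q(\fib f)\to h^q(X)\xrightarrow{h^q(f)} h^q(X)\to\cdots.
\]
Concretely, this is the sequence attached to the degreewise split short exact sequence $0\to I^{\bullet-1}\to\fib(\widetilde f)\to I^\bullet\to 0$ of the explicit mapping-fiber complex. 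Cutting the long exact sequence at $h^q(\fib f)$ immediately gives
\[
0\to\coker h^{q-1}(f)\to H^q(G,A(\vartheta))\to\ker h^q(f)\to 0.
\]

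It remains to identify $h^q(X)$ and $h^q(f)$. The idempotent $e_{-m}$ acts by an exact functor, so it commutes with taking cohomology. Forgetting the action is also exact. Hence
\[
h^q(X)=e_{-m}h^q(\X(A))=e_{-m}H^q(H,A),
\]
with $H^q(H,A)$ regarded as a $\Gamma$-module as in the spectral sequence corollary. Next, $h^q$ is an additive functor. It therefore sends $\gamma-\vartheta^{-1}(\gamma)$ to the action of $\gamma$ on $e_{-m}H^q(H,A)$ minus multiplication by the scalar $\vartheta^{-1}(\gamma)$. Multiplication by a $p$-adic unit on a $p$-primary group is natural, so it commutes with $h^q$.

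For $q=0$, the cokernel term lives in degree $-1$. That term vanishes, since $H^{-1}(H,A)=0$ for bounded-below complexes, and the statement then reads $H^0\cong\ker$. This case is consistent with the general sequence.

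The main point needing care is that the action of $\gamma$ on the derived object $X$ really induces the natural $\Gamma_1$-action on the cohomology groups $e_{-m}H^q(H,A)$. To check this, I would represent $\X(A)$ by $(I^\bullet)^H$ for an injective resolution $I^\bullet$ in $\modcat[G]$. On $(I^\bullet)^H$, the element $\gamma$ acts by an honest chain map, namely the action of any lift of $\gamma$ to $G$. The map it induces on cohomology is by definition the conjugation action on $H^q(H,A)$. Since the statement only involves cohomology groups, the choice of chain representative of $f$ does not matter.
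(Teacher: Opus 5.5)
Your proposal is correct and is essentially the paper's argument: take the long exact cohomology sequence of the fiber sequence from Theorem \ref{thm: cone_identification}, identify $h^q(e_{-m}\X(A))$ with $e_{-m}H^q(H,A)$ using exactness of $e_{-m}$, and split the sequence at $H^q(G,A(\vartheta))$. Your extra check that the derived endomorphism $\gamma$ induces the natural conjugation action on $H^q(H,A)$ (via a lift of $\gamma$ acting on $(I^\bullet)^H$) is a point the paper leaves implicit.
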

Note that $H^{-1}(H,A)=0$.
\begin{proof}
Take cohomology of the canonical cofiber sequence
\[
\mathbb C_{\vartheta^{-1}}(A)\longrightarrow e_{-m}\X(A)
\xrightarrow{\gamma-\vartheta^{-1}(\gamma)}
e_{-m}\X(A)
\longrightarrow \mathbb C_{\vartheta^{-1}}(A)[1]
\]
and use the theorem above to identify $\mathbb C_{\vartheta^{-1}}(A)$ with $R\Gamma(G,A(\vartheta))$.
Since $e_{-m}$ is exact, one has
\[
h^q(e_{-m}\X(A))\simeq e_{-m}H^q(H,A).
\]
The resulting long exact sequence yields the displayed short exact sequence
\[
0\to \coker\bigl(\gamma-\vartheta^{-1}(\gamma)\bigr)\to H^q(G,A(\vartheta))\to \ker\bigl(\gamma-\vartheta^{-1}(\gamma)\bigr)\to 0
\]
in each degree $q$, with the first map taken in degree $q-1$ and the second in degree $q$.
\end{proof}

\begin{lem}\label{lem: H0_vanish}
    Let $\vartheta:\Gamma\to\Z_p^\times$ be a continuous character with $\vartheta|_\Delta=\omega_\Delta^m$ for $m\nequiv0\pmod{p-1}$.
    Then, 
    \[
    H^0(G,A(\vartheta))=0.
    \]
\end{lem}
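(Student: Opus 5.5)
Since $H^0(G,A(\vartheta))=A(\vartheta)^G$ is a subgroup of $A$, the plan is to go through the branch decomposition rather than argue directly. The simplest route is Proposition~\ref{prop: short_ES} with $q=0$: the cokernel term involves $e_{-m}H^{-1}(H,A)=0$, so
\[
H^0(G,A(\vartheta))\cong \ker\bigl(\gamma-\vartheta^{-1}(\gamma): e_{-m}H^0(H,A)\to e_{-m}H^0(H,A)\bigr),
\]
and it suffices to show that $e_{-m}H^0(H,A)=0$.

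Next we compute $H^0(H,A)=A^H$. Because $A$ carries the trivial $G$-action, $A^H=A$, and the induced $\Gamma$-action (by conjugation through $G\twoheadrightarrow\Gamma$) is again trivial. In particular $\Delta$ acts trivially, so for $\delta\in\Delta$ and $x\in e_jA$ we have $x=\delta x=\omega_\Delta^j(\delta)x$, using the computation $[\delta]e_j=\omega_\Delta^j(\delta)e_j$ from the proof of Proposition~\ref{prop: branch_decomposition}.

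Take $j=-m\not\equiv 0\pmod{p-1}$ and choose $\delta$ with $\omega_\Delta^{-m}(\delta)\neq 1$. Then $\omega_\Delta^{-m}(\delta)-1$ is a $p$-adic unit, since distinct $(p-1)$-st roots of unity remain distinct mod $p$. Multiplication by this unit is injective on the $p$-primary group $A$, so from $(\omega_\Delta^{-m}(\delta)-1)x=0$ we conclude $x=0$. Hence $e_{-m}A=0$, and the kernel above vanishes.

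Alternatively, one can argue directly: an element $x\in A(\vartheta)^G$ satisfies $\vartheta(\bar g)x=x$ for all $g$. Taking $g$ mapping to $\delta\in\Delta$ gives $(\omega_\Delta^m(\delta)-1)x=0$, and the same unit argument yields $x=0$. This is the more robust argument and could serve as the main proof.

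The only delicate point is checking that $\omega_\Delta^{\pm m}(\delta)-1$ is a unit in $\Z_p$. This follows because the reduction mod $p$ map $\mu_{p-1}\to\mathbb{F}_p^\times$ is injective, so $\omega_\Delta^{\pm m}(\delta)\not\equiv 1\pmod p$ whenever $\omega_\Delta^{\pm m}(\delta)\neq 1$.
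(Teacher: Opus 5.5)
Your proposal is correct and follows the paper's own route: apply Proposition~\ref{prop: short_ES} with $q=0$, then observe that $e_{-m}H^0(H,A)=e_{-m}A=0$ because $\Delta$ acts trivially on $A$. The paper phrases this last step as $e_0A=A$ plus orthogonality of the idempotents, whereas you kill $e_{-m}A$ with the unit $\omega_\Delta^{-m}(\delta)-1$; this is an equivalent formulation, and your alternative direct argument on $A(\vartheta)^G$ is also valid and bypasses the descent machinery entirely.
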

\begin{proof}
    Since $H$ acts trivially on $A$,
    \[
    H^0(H,A)=A.
    \]
    The $\Delta$-action on these $H^0$-groups is trivial, so they lie entirely in the branch $e_0$. Indeed, $e_0\cdot a=a$ for all $a\in A$, so $e_0A=A$. Since $m\not\equiv 0\pmod{p-1}$,
    \[
    e_{-m}H^0(H,A)=0.
    \]
    Applying the previous short exact sequence with $q=0$ gives
    \[
    H^0(G,A(\vartheta))=0.
    \]
\end{proof}

\begin{cor}\label{cor: H1_formula}
    Let $\vartheta:\Gamma\to\Z_p^\times$ be a continuous character with $\vartheta|_\Delta=\omega_\Delta^m$ for $m\nequiv0\pmod{p-1}$.
    Then
    \[
    H^1(G,A(\vartheta))
    \cong
    \ker\Bigl(
    \gamma-\vartheta^{-1}(\gamma):
    e_{-m}H^1(H,A)\longrightarrow e_{-m}H^1(H,A)
    \Bigr).
    \]
\end{cor}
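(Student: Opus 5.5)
The plan is to apply Proposition \ref{prop: short_ES} with $q=1$. This gives a short exact sequence
\[
0\longrightarrow
\coker\Bigl(\gamma-\vartheta^{-1}(\gamma): e_{-m}H^{0}(H,A)\to e_{-m}H^{0}(H,A)\Bigr)
\longrightarrow H^1(G,A(\vartheta))
\longrightarrow
\ker\Bigl(\gamma-\vartheta^{-1}(\gamma): e_{-m}H^1(H,A)\to e_{-m}H^1(H,A)\Bigr)
\longrightarrow 0.
\]
It then suffices to show that the left-hand cokernel vanishes. Once it does, the surjection onto the kernel term is also injective, and the claimed isomorphism follows.

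The cokernel term is handled exactly as in the proof of Lemma \ref{lem: H0_vanish}. Since $H$ acts trivially on $A$, we have $H^0(H,A)=A$. The $\Delta$-action on this group, induced from the trivial $G$-action, is trivial. Hence $e_0A=A$, and by orthogonality of the idempotents $e_jA=e_je_0A=0$ for $j\not\equiv 0$. Because $m\not\equiv 0\pmod{p-1}$, we get $-m\not\equiv 0$, so $e_{-m}H^0(H,A)=0$. The cokernel of an endomorphism of the zero group is zero.

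There is no real obstacle here. The only point needing care is that the identification $h^0(e_{-m}\X(A))\simeq e_{-m}H^0(H,A)$ uses the exactness of $e_{-m}$, which the paper already invoked in the proof of Proposition \ref{prop: short_ES}. Note also that the isomorphism is induced by the natural map $H^1(G,A(\vartheta))\to e_{-m}H^1(H,A)$ coming from the cofiber sequence, so it is canonical.
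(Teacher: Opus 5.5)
Your proposal is correct and matches the paper's proof: apply Proposition \ref{prop: short_ES} at $q=1$, then kill the cokernel term using $e_{-m}H^0(H,A)=e_{-m}A=0$, as in the proof of Lemma \ref{lem: H0_vanish}. You cite that vanishing directly rather than the lemma's statement $H^0(G,A(\vartheta))=0$, and that is the right input, since the latter alone would only rule out a kernel, not a cokernel, of $\gamma-\vartheta^{-1}(\gamma)$ on $e_{-m}H^0(H,A)$.
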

\begin{proof}
    Apply Lemma \ref{lem: H0_vanish} to Proposition \ref{prop: short_ES} for $q=1$.
\end{proof}

\section{Passing to Pontryagin dual}
In this section, we pass to the Pontryagin duals of the previously defined objects to facilitate the application of Iwasawa theory in the next section.

\begin{rmk}
    Let $\mathrm{Mod}^{\mathrm{cpt}}_{K,p}$ denote the category of compact Hausdorff
    abelian pro-$p$ groups with continuous $K$-action. For
    $M \in \modcat[K]$, define its Pontryagin dual by
    \[
    M^\vee := \operatorname{Hom}_{\mathrm{cont}}(M,\Q_p/\Z_p).
    \]
    Since $M$ is discrete, every group homomorphism $M \to \Q_p/\Z_p$ is continuous.
    Endow $M^\vee$ with the compact-open topology and the $K$-action
    \[
    (k\cdot f)(x) := kf(k^{-1}x)=f(k^{-1}x)
    \qquad
    (k\in K,\ f\in M^\vee,\ x\in M).
    \]
    Then $M^\vee$ belongs to $\mathrm{Mod}^{\mathrm{cpt}}_{K,p}$, and Pontryagin duality induces an
    exact contravariant equivalence
    \[
    (-)^\vee :
    \bigl(\modcat[K]\bigr)^{\mathrm{op}}
    \xrightarrow{\sim}
    \mathrm{Mod}^{\mathrm{cpt}}_{K,p}.
    \]
    Its quasi-inverse is again Pontryagin duality on compact modules. For example,
    \[
    (\Q_p/\Z_p)^\vee \cong \Z_p.
    \]
\end{rmk}

\begin{lem}\label{lem: branch_dual_commute}
Let $M$ be a discrete $p$-primary $\Gamma$-module, and let
$m\in \Z/(p-1)\Z$. Then there is a canonical isomorphism of compact $\Gamma$-modules
\[
(e_{-m}M)^\vee \cong e_m(M^\vee).
\]
\end{lem}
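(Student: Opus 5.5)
Since $e_{-m}M$ is a direct summand of $M$ and Pontryagin duality is additive, we plan to dualize the branch decomposition and then identify how $\Delta$ acts on each dual summand.

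\textbf{Step 1 (dual decomposition).} From the functorial decomposition $M=\bigoplus_{j}e_jM$ we get
\[
M^\vee\cong\bigoplus_j (e_jM)^\vee .
\]
Here $(e_jM)^\vee$ is identified with the subgroup of characters $f\in M^\vee$ vanishing on $e_iM$ for all $i\neq j$; concretely $f=f\circ e_j$. This subgroup is closed and $\Gamma$-stable, because $\Gamma$ is abelian and hence commutes with every $e_i$.

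\textbf{Step 2 ($\Delta$-action on the dual summand).} Take $f\in(e_jM)^\vee$, $x\in e_jM$ and $\delta\in\Delta$. By the computation in the proof of Proposition \ref{prop: branch_decomposition}, $\delta$ acts on $e_jM$ by $\omega_\Delta^j(\delta)$. With the dual action $(\delta f)(x)=f(\delta^{-1}x)$ this gives
\[
(\delta f)(x)=f\bigl(\omega_\Delta^{-j}(\delta)x\bigr)=\omega_\Delta^{-j}(\delta)f(x),
\]
using $\Z_p$-linearity of $f$, valid because $M$ is $p$-primary. So $\Delta$ acts on $(e_jM)^\vee$ through $\omega_\Delta^{-j}$.

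\textbf{Step 3 (identify the branch).} The idempotent $e_k$ acts on a module where $\Delta$ acts by a character $\omega_\Delta^{-j}$ as the scalar
\[
\frac{1}{p-1}\sum_\delta \omega_\Delta^{-k}(\delta)\,\omega_\Delta^{-j}(\delta),
\]
which equals $1$ if $k\equiv -j$ and $0$ otherwise, by orthogonality of characters of $\Delta\cong\mu_{p-1}$. The factor $1/(p-1)$ makes sense because $p-1\in\Z_p^\times$ and compact pro-$p$ groups are $\Z_p$-modules.

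\textbf{Step 4 (conclude).} Hence $e_{-j}(M^\vee)=(e_jM)^\vee$ as summands of $M^\vee$. Taking $j=-m$ yields
\[
(e_{-m}M)^\vee\cong e_m(M^\vee).
\]
The isomorphism is canonical and $\Gamma$-equivariant, since it is induced by restricting characters along the projection $M\to e_{-m}M$, or equivalently by the inclusion $e_{-m}M\hookrightarrow M$.

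The only delicate point is the sign in Step 2, which comes from the inverse in the contragredient action; the rest is formal.
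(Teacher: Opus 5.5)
Your proof is correct, but it takes a different route from the paper's.

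\textbf{The paper's route.} The paper never decomposes $M^\vee$ and never uses orthogonality. It reindexes $\delta\mapsto\delta^{-1}$ in the defining sum of the idempotent and obtains the transpose identity $(e_mf)(x)=f(e_{-m}x)$ for all $f\in M^\vee$ and $x\in M$. This characterizes $e_m(M^\vee)$ intrinsically as the characters with $f=f\circ e_{-m}$, that is, those factoring through $e_{-m}M$. The paper then writes down the mutually inverse maps $\Phi(f)=f|_{e_{-m}M}$ and $\Psi(g)=g\circ e_{-m}$, and gets equivariance from the centrality of $e_{-m}$.

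\textbf{Your route.} You dualize the whole branch decomposition. You then compute that $\Delta$ acts on $(e_jM)^\vee$ through the contragredient character $\omega_\Delta^{-j}$, using the eigencharacter computation from the proof of Proposition \ref{prop: branch_decomposition}. Finally, orthogonality of the characters of $\Delta$ shows that $e_{-j}$ is the identity on $(e_jM)^\vee$ and every other idempotent kills it.

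\textbf{Comparison.} Your version makes the origin of the sign flip transparent. It also identifies all branches of $M^\vee$ at once, namely $e_{-j}(M^\vee)=(e_jM)^\vee$ for every $j$. The cost is that it imports the eigencharacter fact and character orthogonality. The paper's argument is self-contained, amounts to a single computation, and produces the explicit inverse pair directly.

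\textbf{Wording fix.} The map $(e_{-m}M)^\vee\to M^\vee$ is precomposition with the projection $e_{-m}:M\to e_{-m}M$. Its inverse is restriction along the inclusion $e_{-m}M\hookrightarrow M$. Your phrase ``restricting along the projection'' runs these two together.
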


\begin{proof}
For $f\in M^\vee$ and $x\in M$, one computes
\[
\begin{aligned}
(e_mf)(x)
&=
\frac{1}{p-1}\sum_{\delta\in\Delta}\omega_\Delta^{-m}(\delta)\,(\delta\cdot f)(x) \\
&=
\frac{1}{p-1}\sum_{\delta\in\Delta}\omega_\Delta^{-m}(\delta)\,f(\delta^{-1}x) \\
&=
\frac{1}{p-1}\sum_{\delta'\in\Delta}\omega_\Delta^{m}(\delta')\,f(\delta'x) \\
&=
f\!\left(
\frac{1}{p-1}\sum_{\delta'\in\Delta}\omega_\Delta^{m}(\delta')\,\delta'x
\right) \\
&= f(e_{-m}x).
\end{aligned}
\]
Therefore $f\in e_m(M^\vee)$ if and only if $f=f\circ e_{-m}$, i.e. if and only if
$f$ factors through the projection $e_{-m}:M\twoheadrightarrow e_{-m}M$. Hence restriction
induces a homomorphism
\[
\Phi:e_m(M^\vee)\longrightarrow (e_{-m}M)^\vee,
\qquad
\Phi(f)=f|_{e_{-m}M}.
\]
Conversely, for $g\in (e_{-m}M)^\vee$, define
\[
\Psi(g)(x):=g(e_{-m}x)
\qquad (x\in M).
\]
By the identity proved above, $\Psi(g)\in e_m(M^\vee)$. Moreover,
\[
\Phi(\Psi(g))(e_{-m}x)=\Psi(g)(e_{-m}x)=g(e_{-m}^2x)=g(e_{-m}x),
\]
so $\Phi\circ\Psi=\id$, and
\[
\Psi(\Phi(f))(x)=\Phi(f)(e_{-m}x)=f(e_{-m}x)=f(x),
\]
so $\Psi\circ\Phi=\id$. Thus $\Phi$ is an isomorphism.
Since $e_{-m}\in \Z_p[\Delta]\subset \Z_p[\Gamma]$ is central, the maps $\Phi$ and $\Psi$
are $\Gamma$-equivariant.
\end{proof}

\begin{rmk}\label{rmk: H1_primary}
    Since $A$ has trivial $H$-action, we have
    \[
    H^1(H,A)=\Hom_{\cont}(H,A).
    \]
    As $H$ is compact and $A$ is discrete, every continuous homomorphism
    $H\to A$ has finite image; since $A$ is $p$-primary, the image is finite $p$-torsion.
    Hence $H^1(H,A)$ is $p$-primary.
\end{rmk}

\begin{prop}
Let $A$ be a discrete $G$-module with trivial action.
Let $\vartheta:\Gamma\to\Z_p^\times$ be a continuous character such that
$\vartheta|_\Delta=\omega_\Delta^m$ for some $m\nequiv 0\pmod{p-1}$. Then
\[
H^1(G,A(\vartheta))^\vee
\simeq
\coker\bigl(\gamma^{-1}-\vartheta^{-1}(\gamma):e_mH^1(H,A)^\vee\longrightarrow e_mH^1(H,A)^\vee\bigr).
\]
\end{prop}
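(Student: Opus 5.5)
We start from Corollary \ref{cor: H1_formula}, which under the hypothesis $m\nequiv 0\pmod{p-1}$ identifies
\[
H^1(G,A(\vartheta))\cong\ker\bigl(\phi\bigr),\qquad \phi:=\gamma-\vartheta^{-1}(\gamma):e_{-m}H^1(H,A)\longrightarrow e_{-m}H^1(H,A).
\]
By Remark \ref{rmk: H1_primary}, $M:=H^1(H,A)$ is a discrete $p$-primary $\Gamma$-module, and so is its direct factor $e_{-m}M$. We may therefore apply Pontryagin duality. Since $(-)^\vee$ is an exact contravariant equivalence, it turns kernels into cokernels:
\[
(\ker\phi)^\vee\cong\coker(\phi^\vee).
\]
To see this, dualize the exact sequence $0\to\ker\phi\to e_{-m}M\xrightarrow{\phi}e_{-m}M$. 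This gives an exact sequence $e_{-m}M^\vee\xrightarrow{\phi^\vee}e_{-m}M^\vee\to(\ker\phi)^\vee\to 0$. Here $(e_{-m}M)^\vee$ is abbreviated, and it will be replaced by $e_m(M^\vee)$ using Lemma \ref{lem: branch_dual_commute}.

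The remaining work is to compute the dual map $\phi^\vee$. It acts by precomposition: for $f\in M^\vee$, $\phi^\vee(f)=f\circ\phi$.

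\textbf{Scalar part.} Multiplication by the $p$-adic unit $\vartheta^{-1}(\gamma)$ is $\Z_p$-linear. Since $f$ is a homomorphism into $\Q_p/\Z_p$, which is a $\Z_p$-module, we have $f(\vartheta^{-1}(\gamma)x)=\vartheta^{-1}(\gamma)f(x)$. So the scalar dualizes to multiplication by the same scalar.

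\textbf{Group-element part.} By the action convention $(k\cdot f)(x)=f(k^{-1}x)$ fixed in the duality remark, we get $(f\circ\gamma)(x)=f(\gamma x)=(\gamma^{-1}\cdot f)(x)$. So precomposition with $\gamma$ is the action of $\gamma^{-1}$ on $M^\vee$.

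Putting the two parts together gives $\phi^\vee=\gamma^{-1}-\vartheta^{-1}(\gamma)$ on $(e_{-m}M)^\vee$.

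Finally, we transport this through the isomorphism $\Phi:e_m(M^\vee)\xrightarrow{\sim}(e_{-m}M)^\vee$ of Lemma \ref{lem: branch_dual_commute}. That lemma asserts $\Phi$ is $\Gamma$-equivariant, and it is restriction of functionals, so it also commutes with scalar multiplication. Hence $\Phi$ intertwines $\gamma^{-1}-\vartheta^{-1}(\gamma)$ on both sides, and the cokernels agree. This yields
\[
H^1(G,A(\vartheta))^\vee\cong\coker\bigl(\gamma^{-1}-\vartheta^{-1}(\gamma):e_mH^1(H,A)^\vee\to e_mH^1(H,A)^\vee\bigr).
\]

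The main obstacle is bookkeeping rather than substance. The inverse $\gamma^{-1}$ comes from the contragredient action convention, and it is easy to drop or double it. The index swap $-m\leftrightarrow m$ comes from Lemma \ref{lem: branch_dual_commute}. We should also check that duality is applied in the category of abelian groups with the $\Gamma$-action tracked by hand; this is legitimate because every homomorphism from a discrete group into $\Q_p/\Z_p$ is continuous, so exactness of $(-)^\vee$ is just exactness of Pontryagin duality on discrete torsion groups.
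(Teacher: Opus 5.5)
Your proposal is correct and follows essentially the same route as the paper. Both start from Corollary~\ref{cor: H1_formula}, use exactness of Pontryagin duality to turn the kernel into a cokernel, identify $(e_{-m}H^1(H,A))^\vee$ with $e_mH^1(H,A)^\vee$ via Lemma~\ref{lem: branch_dual_commute}, and compute the dual of $\gamma-\vartheta^{-1}(\gamma)$ as $\gamma^{-1}-\vartheta^{-1}(\gamma)$ using the convention $(k\cdot f)(x)=f(k^{-1}x)$; your explicit check that $\Phi$ intertwines the two endomorphisms is a detail the paper leaves implicit.
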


\begin{proof}
By Corollary \ref{cor: H1_formula},
\[
H^1(G,A(\vartheta))
\simeq
\ker\bigl(\gamma-\vartheta^{-1}(\gamma):e_{-m}H^1(H,A)\to e_{-m}H^1(H,A)\bigr).
\]
Since Pontryagin duality is an exact contravariant equivalence, we obtain
\[
H^1(G,A(\vartheta))^\vee
\simeq
\coker\Bigl(
(\gamma-\vartheta^{-1}(\gamma))^\vee:
(e_{-m}H^1(H,A))^\vee
\longrightarrow
(e_{-m}H^1(H,A))^\vee
\Bigr).
\]
By Lemma \ref{lem: branch_dual_commute} and Remark \ref{rmk: H1_primary},
\[
(e_{-m}H^1(H,A))^\vee \simeq e_mH^1(H,A)^\vee.
\]
It remains to identify the dual endomorphism. For $f\in H^1(H,A)^\vee$ and
$x\in H^1(H,A)$,
\[
\bigl((\gamma-\vartheta^{-1}(\gamma))^\vee f\bigr)(x)
=
f(\gamma x)-\vartheta^{-1}(\gamma)f(x)
=
(\gamma^{-1}\cdot f)(x)-\vartheta^{-1}(\gamma)f(x).
\]
Hence
\[
(\gamma-\vartheta^{-1}(\gamma))^\vee
=
\gamma^{-1}-\vartheta^{-1}(\gamma)
\]
on $e_mH^1(H,A)^\vee$, and therefore
\[
H^1(G,A(\vartheta))^\vee
\simeq
\coker\bigl(\gamma^{-1}-\vartheta^{-1}(\gamma):e_mH^1(H,A)^\vee\to e_mH^1(H,A)^\vee\bigr).
\]
\end{proof}

\begin{defn}
    We denote
    \[
    f_m:=\gamma^{-1}-u^{-m}
    \]
    and define
    \[
    X(A):=H^1(H, A)^\vee.
    \]
\end{defn}
\begin{cor}\label{cor: H1_dual}
    Under the previous notations, we have
    \[
    H^1(G, A(\vartheta))^\vee\simeq\frac{e_mX(A)}{f_m\cdot e_mX(A)}.
    \]
\end{cor}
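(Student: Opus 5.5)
The corollary is a direct specialization of the preceding proposition, so the plan is to plug in the definitions and check that the scalar matches. From the preceding proposition,
\[
H^1(G,A(\vartheta))^\vee\simeq\coker\bigl(\gamma^{-1}-\vartheta^{-1}(\gamma):e_mX(A)\to e_mX(A)\bigr),
\]
where $X(A)=H^1(H,A)^\vee$ by definition. A cokernel of an endomorphism $\phi$ of a module $N$ is $N/\phi N$. It therefore suffices to identify the endomorphism $\gamma^{-1}-\vartheta^{-1}(\gamma)$ with $f_m=\gamma^{-1}-u^{-m}$ acting on $e_mX(A)$.

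The key step is the scalar identification $\vartheta^{-1}(\gamma)=u^{-m}$. In the principal case $\vartheta=\chi_\Gamma^m$, which is the case the notation $f_m$ is designed for, we have $\chi_\Gamma(\gamma)=u$, as recorded in Section 1 (since $\omega_\Gamma(\gamma)=1$). Hence $\vartheta(\gamma)=u^m$ and $\vartheta^{-1}(\gamma)=u^{-m}$. Then $\gamma^{-1}-\vartheta^{-1}(\gamma)=f_m$ as operators on $e_mX(A)$, and the image of this operator is $f_m\cdot e_mX(A)$, where $f_m$ is viewed in $\Z_p[[\Gamma]]$ acting on the compact module. This action preserves $e_mX(A)$ because $\Gamma$ is abelian, so $\gamma$ commutes with $e_m$.

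I would also check that the quotient notation is meaningful, i.e.\ that $f_m\cdot e_mX(A)$ is a submodule. This holds because $f_m$ is a $\Lambda$-linear (indeed $\Z_p[[\Gamma]]$-linear) endomorphism, so its image is a $\Gamma$-stable subgroup. Its image is also closed, being the continuous image of a compact module; this ensures the quotient is again a compact Hausdorff module, consistent with Pontryagin duality.

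The only real obstacle is interpretive. For a general $\vartheta$ with $\vartheta|_\Delta=\omega_\Delta^m$, the value $\vartheta(\gamma)$ need not equal $u^m$. I would therefore state the proof for $\vartheta=\chi_\Gamma^m$, i.e.\ $A(\vartheta)=A(m)$. In that case $\vartheta|_\Delta=\omega_\Delta^m$ holds automatically, and the hypothesis $m\not\equiv 0\pmod{p-1}$ is inherited from the proposition. For general $\vartheta$, the same argument gives the analogous formula with $u^{-m}$ replaced by $\vartheta^{-1}(\gamma)$.
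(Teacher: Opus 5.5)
Your proposal is correct and matches the paper, which obtains this corollary by specializing the preceding proposition through the definitions $X(A)=H^1(H,A)^\vee$ and $f_m=\gamma^{-1}-u^{-m}$, using $\chi_\Gamma(\gamma)=u$ to identify $\vartheta^{-1}(\gamma)$ with $u^{-m}$. Your caveat is also correct: the paper leaves implicit that $\vartheta=\chi_\Gamma^m$ with $m\in\Z$, which is the case used in the next section, and for a general $\vartheta$ with $\vartheta|_\Delta=\omega_\Delta^m$ one must replace $u^{-m}$ by $\vartheta^{-1}(\gamma)$.
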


\section{Special case for $\Q_p/\Z_p$}
We apply the results in the previous section to the special case of $A=\Q_p/\Z_p$ with the Tate twist $\vartheta=\chi_\Gamma^m$. In this section, we denote $A=\Q_p/\Z_p$ the $G$-module with the trivial action and $\Lambda:=\Lambda(\Gamma_1)$, the Iwasawa algebra.
This provides the results for $H^i(G, \Q_p/\Z_p(m))$ that were previously known through Iwasawa theory.

\begin{defn}
    Define
    \[
    X_S:=H^1(H, \Q_p/\Z_p)^\vee.
    \]
    This is exactly what we called the \emph{$S$-ramified Iwasawa module}.
    In the same light, define
    \[
    X_{S, n}:=H^1(H, \Z/p^n\Z)^\vee.
    \]
\end{defn}
By \cite[Proposition 11.3.1]{nsw}, $X_S$ is finitely generated $\Lambda$-module.
We have the following characterization of this module $X_S$:
\[
X_S\cong H^{ab}(p)\cong G(F_S(p), F_\infty)^{ab}.
\]
Denote $f_m:=\gamma^{-1}-u^{-m}\in\Lambda(\Gamma_1)$. Under the identification $\Lambda(\Gamma_1)\cong \Z_p[[T]]$ given by $\gamma^{-1}\mapsto T+1$, $f_m$ is equal to $T-u^{-m}+1\in \Z_p[[T]]$.
On the other hand, $e_m$ is not an element of $\Lambda$ as it consists of elements in $\Delta \subset \Gamma$. Nevertheless, as we have previously observed, $\Gamma_1$ acts on each $e_j$-branch due to the fact that $\Delta$ and $\Gamma_1$ commute; hence, $e_mX_S$ remains an Iwasawa $\Lambda$-module.

\begin{rmk}\label{rmk: weak_Leopoldt}
    The weak Leopoldt conjecture holds for the cyclotomic $\Z_p$ extension $F_S$ by \cite[10.3.25]{nsw}. 
    On the other hand, the Ferrero-Washington theorem states that $\mu=\mu(X_S)=0$.
    Therefore, $G(F_S(p)/F_\infty)$ is a free pro-$p$ group by \cite[Theorem 11.3.7]{nsw}.
\end{rmk}
\begin{rmk}\label{rmk: X_S} 
    By \cite[Theorem 10.3.22]{nsw}, $X_S$ has no finite nontrivial submodule, and $rank_{\Lambda(\Gamma)}X_S=r_2=\frac{p-1}{2}$.
    Moreover, applying \cite[Theorem 11.3.2]{nsw} to $k=F_1$, $k_\infty=F_\infty$, $H^2(H, \Q_p/\Z_p)=0$ and $pd_{\Lambda(\Gamma)} X_S\leq 1$.
\end{rmk}

\begin{cor}\label{cor: H_dual}
    Assume that $m\nequiv0\pmod{p-1}$. Then
    \[
    H^1(G, \Q_p/\Z_p(m))^\vee\simeq \frac{e_m X_S}{f_m\cdot e_mX_S},
    \qquad
    H^1(G, \Z/p^n\Z(m))^\vee\simeq \frac{e_m X_{S, n}}{f_m\cdot e_mX_{S, n}}
    \]
    and
    \[
    H^2(G, \Q_p/\Z_p(m))^\vee\simeq e_mX_S[f_m]
    \]
    where $[f_m]$ denotes the $f_m$-torsion part.
\end{cor}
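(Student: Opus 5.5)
The $H^1$ statements need no new argument beyond the general machinery. Take $\vartheta=\chi_\Gamma^m$; then $\vartheta|_\Delta=\omega_\Delta^m$ and $\vartheta(\gamma)=\chi_\Gamma(\gamma)^m=u^m$, so $\vartheta^{-1}(\gamma)=u^{-m}$. Applying Corollary \ref{cor: H1_dual} with $A=\Q_p/\Z_p$ gives the first isomorphism. Applying it with $A=\Z/p^n\Z$ gives the second, since $X(\Z/p^n\Z)=X_{S,n}$ by definition. In both cases the hypothesis $m\nequiv 0\pmod{p-1}$ is exactly what Lemma \ref{lem: H0_vanish} needs to kill the cokernel term coming from $H^0(H,A)=A$.

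For $H^2$ I will use Proposition \ref{prop: short_ES} with $q=2$ and $A=\Q_p/\Z_p$. The right-hand term is the kernel of $\gamma-u^{-m}$ on $e_{-m}H^2(H,\Q_p/\Z_p)$. This group is $0$ by Remark \ref{rmk: X_S}, which uses weak Leopoldt. Hence
\[
H^2(G,\Q_p/\Z_p(m))\cong \coker\bigl(\gamma-u^{-m}:e_{-m}H^1(H,A)\to e_{-m}H^1(H,A)\bigr).
\]
Now I dualize, exactly as in the proof of the Proposition preceding Corollary \ref{cor: H1_dual}. Pontryagin duality is an exact contravariant equivalence, so it turns cokernels into kernels. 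Lemma \ref{lem: branch_dual_commute}, together with Remark \ref{rmk: H1_primary}, identifies $(e_{-m}H^1(H,A))^\vee$ with $e_mX_S$. The dual of $\gamma-u^{-m}$ is $\gamma^{-1}-u^{-m}=f_m$. This gives
\[
H^2(G,\Q_p/\Z_p(m))^\vee\cong\ker(f_m: e_mX_S\to e_mX_S)=e_mX_S[f_m].
\]

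The main point to check is the vanishing of $H^2(H,\Q_p/\Z_p)$; I will take it from Remark \ref{rmk: X_S}, i.e.\ \cite[Theorem 11.3.2]{nsw} together with weak Leopoldt. For $\Z/p^n\Z$ no $H^2$ statement is claimed, which is consistent: $H^2(H,\Z/p^n\Z)$ need not vanish, but its $H^1$ formula only uses $H^0$-vanishing.

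I also need the identification of the dual map to be checked on the branch, and that the isomorphisms are $\Lambda$-linear. Both follow as before, because $e_m$ commutes with $\gamma$ and $\Gamma$ is abelian.
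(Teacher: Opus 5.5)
Your proposal is correct and is essentially the paper's argument. The $H^1$ isomorphisms come directly from Corollary~\ref{cor: H1_dual} with $A=\Q_p/\Z_p$ and $A=\Z/p^n\Z$. The $H^2$ isomorphism comes from Proposition~\ref{prop: short_ES} at $q=2$, where the kernel term vanishes because $H^2(H,\Q_p/\Z_p)=0$ (Remark~\ref{rmk: X_S}), followed by Pontryagin dualization.

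One aside is inaccurate, though harmless to the proof. In this setting $H^2(H,\Z/p^n\Z)$ does vanish. Using $H^2(H,\Q_p/\Z_p)=0$, it is dual to $X_S[p^n]$. That group is zero because $\mu(X_S)=0$ and $X_S$ has no nonzero finite submodule. So the absence of an $H^2$ formula for $\Z/p^n\Z$ in the statement is not forced by any failure of that vanishing.
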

\begin{proof}
    The results for $H^1$ come from Corollary \ref{cor: H1_dual}.
    By \ref{prop: short_ES}, we have 
    \[
    H^2(G, A(m))\simeq \coker\Bigr(\gamma-u^{-m}:e_{-m}H^1(H, A)\longrightarrow e_{-m}H^1(H, A)\Bigr)
    \]
    because the kernel term vanishes by Remark \ref{rmk: X_S}.
\end{proof}
There is a canonical isomorphism
\[
H^i(G, \Q_p/\Z_p(m))^\vee
\simeq \varprojlim_n H^i(G, \Z/p^n\Z(m))^\vee
\]
and this yields a limit description for $H^1$:
\[
\frac{e_m X_S}{f_m\cdot e_mX_S}\simeq \varprojlim_n \frac{e_m X_{S, n}}{f_m\cdot e_mX_{S, n}}.
\]

We shall give some known results for the cohomologies using Corollary \ref{cor: H_dual} with the Iwasawa theory. For simplicity, let us denote $M_m:=e_mX_S$. Since $X_S$ is finitely generated(see Remark \ref{rmk: X_S}), each branch $e_mX_S$ is also finitely generated $\Lambda$-module; hence it admits the standard Iwasawa decomposition into certain elementary $\Lambda$-module $E_m$, which means a homomorphism $M_m\to E_m$ with finite kernel and cokernel. But $M_m$ has no finite submodule, hence the homomorphism has trivial kernel. Writing the cokernel by $C_m$, we have a short exact sequence
\[
0\longrightarrow M_m\longrightarrow E_m\longrightarrow C_m\longrightarrow0,
\]
where
\[
E_m=\Lambda^{r_m}\oplus\bigoplus_{i} \Lambda/p^{k_i}\oplus\bigoplus_{j} \Lambda/F_j^{n_j}.
\]
with $r_m=rank_{\Lambda}M_m$ and the Weirstrass polynomials $F_j$ over $\Z_p$.
\cite[Proposition 11.4.5]{nsw} gives that 
\begin{equation}\label{eqn: r_m}
    r_m=\mathrm{rank}_{\Lambda}e_m X_S=
    \begin{cases}
        1, & \text{if } m \text{ is odd} \\
        0, & \text{if } m \text{ is even} \\
    \end{cases}
\end{equation}
Consider a commutative diagram with exact rows
\begin{center}
    \begin{tikzcd}[column sep=large]
    0 \arrow[r] & M_m \arrow[d, "f_m\cdot"'] \arrow[r] & E_m \arrow[d, "f_m\cdot"'] \arrow[r] & C_m \arrow[d, "f_m\cdot"'] \arrow[r] & 0 \\
    0 \arrow[r] & M_m \arrow[r] & E_m \arrow[r] & C_m \arrow[r] & 0
    \end{tikzcd}
\end{center}
where the objects in the third columns are finite.
Applying the snake lemma yields the 6-term exact sequence
\begin{equation}\label{eqn: 6_ES}
    0\longrightarrow M_m[f_m]\longrightarrow E_m[f_m]\longrightarrow C_m[f_m]
    \longrightarrow \frac{M_m}{f_mM_m}\longrightarrow\frac{E_m}{f_mE_m}\longrightarrow \frac{C_m}{f_mC_m}\longrightarrow 0.
\end{equation}
Note that $f_m=T-(u^{-m}-1)$ is a Weirstrass polynomial of degree 1. By \cite[Lemma 5.3.1]{nsw}, $\Lambda/(f_m)\simeq \Z_p$.
Rewrite the $E_m$ as:
\[
E_m=\Lambda^{r_m}\oplus\bigoplus_{i} \Lambda/p^{l_i}\oplus\bigoplus_{j} \Lambda/F_j^{n_j}\oplus\bigoplus_{k=1}^{t_m} \Lambda/f_m^{m_k}
\]
where the $F_j$ are distinct from $f_m$.
Decomposition of $E_m$ with localizing at $(f_m)$ can be writeen as
\[
(E_m)_{(f_m)}\cong\Lambda_{(f_m)}^{r_m}\oplus\bigoplus_{k=1}^{t_m} \Lambda_{(f_m)}/f_m^{m_k}
\]
with the number $t_m$.
In this setting, let us calculate the ${E_m}/{f_mE_m}$.
\begin{itemize}
\renewcommand\labelitemi{---}
    \item $\Lambda^{r_m}/(f_m)\cong (\Lambda/(f_m))^{r_m}\cong \Z_p^{r_m}$.
    \item $(\Lambda/p^{l_i})/f_m\cong \Lambda/(p^{l_i}, f_m)\cong \Z_p/p^{l_i}$, which is finite module.
    \item $(\Lambda/F_j^{n_j})/f_m\cong \Lambda/(F_j^{n_j}, f_m)\cong \Z_p/\overline{F_j}^{n_j}\cong \Z_p/F_j(u^{-m}-1)^{n_j}$, which is finite module.
    \item $(\Lambda/f_m^{m_k})/f_m\cong \Lambda/(f_m)\cong \Z_p$. $m_k$ does not contribute and it appears $t_m$ times.
\end{itemize}
Therefore, 
\[
\frac{E_m}{f_mE_m}\cong \Z_p^{r_m+t_m}\oplus (\mathrm{finite}).
\]
The exact sequence \ref{eqn: 6_ES} ensures that $M_m/f_mM_m$ differs only finite residue from $E_m/f_mE_m$. Indeed, consider an exact sequence
\[
C_m[f_m]\longrightarrow \frac{M_m}{f_mM_m}\longrightarrow\frac{E_m}{f_mE_m}\longrightarrow \frac{C_m}{f_mC_m}.
\]
Applying the exact functor $-\otimes_{\Z_p}\Q_p$ yields an isomorphism
\[
0\longrightarrow \frac{M_m}{f_mM_m}\otimes_{\Z_p}\Q_p\xrightarrow{\sim}\frac{E_m}{f_mE_m}\otimes_{\Z_p}\Q_p\longrightarrow 0,
\]
because the tensor product with $\Q_p$ annihilates all torsion elements in a finite $\Z_p$-module. Here, $\frac{E_m}{f_mE_m}\otimes_{\Z_p}\Q_p$ is isomorphic to $(\Z_p^{r_m+t_m}\oplus (\mathrm{finite}))\otimes_{\Z_p}\Q_p \cong \Q_p^{r_m+t_m}$. Hence $\frac{M_m}{f_mM_m}\otimes_{\Z_p}\Q_p\cong \Q_p^{r_m+t_m}$, and we can conclude that $M_m/f_mM_m$ is a finitely generated $\Z_p$-module of rank $r_m+t_m$. The structure theorem for finitely generated modules over PID gives
\[
\frac{M_m}{f_mM_m}\cong \Z_p^{r_m+t_m}\oplus (\mathrm{finite}).
\]
Passing to the Pontryagin dual, it yields
\[
H^1(G, \Q_p/\Z_p(m))\cong (\Q_p/\Z_p)^{r_m+t_m}\oplus (\mathrm{finite})
\]
Next, calculate the $E_m[f_m]$. Let $v=-u^{-m}+1$ so that $f_m=T+v$.
\begin{itemize}
\renewcommand\labelitemi{---}
    \item $\Lambda[f_m]=0$
    \item Suppose $g(T)=a_nT^n+\cdots a_0\in \Z_p[[T]]$ satisfies \[f_m(T)g(T)=a_nT^{n+1}+(a_{n-1}+a_nv)T^n+(a_{n-2}+va_{n-1})T^{n-1}+\cdots+(a_0+a_1v)T+a_0v\in p^l\Z_p[[T]].\]
    Then $a_n\in p^l\Z_p$, $a_{n-1}\in p^l\Z_p$, ... , $a_0\in p^l\Z_p$.
    Hence $g\in p^l\Z_p[[T]]$ and $(\Lambda/p^{l_i})[f_m]=0$.
    \item Suppose $g$ satisfies $f_mg\in (F^n)$ where $f_m$ and $F$ are distinct irreducible Weirstrass polynomial. But $F^n|f_mg$ contradicts to the fact that $\Z_p[[T]]$ is UFD and $f_m, F$ are distinct irreducible. Thus $g=0$ and $(\Lambda/F_j^{n_j})[f_m]=0$.
    \item Suppose $g$ satisfies $f_mg\in (f_m^{m_k})$, so $f_m^{m_k-1}|g$. Hence $(\Lambda/f_m^{m_k})[f_m]=f_m^{m_k-1}\Lambda/f_m^{m_k}\cong \Lambda/f_m\cong \Z_p$.
\end{itemize}
Therefore, 
\[
E_m[f_m]\cong \Z_p^{t_m}
\]
From the exact sequence \ref{eqn: 6_ES}, $M_m[f_m]$ is a finite index subgroup of $E_m[f_m]$. But a finite index subgroup of $\Z_p^{t_m}$ is again isomorphic to $\Z_p^{t_m}$; hence,
\[
M_m[f_m]\cong \Z_p^{t_m}
\]
and
\[
H^2(G, \Q_p/\Z_p(m))\cong (\Q_p/\Z_p)^{t_m}.
\]
\begin{cor}
    For $m\nequiv 0\pmod{p-1}$, we have
    \begin{align*}
H^1(G, \Q_p/\Z_p(m)) &\cong (\Q_p/\Z_p)^{r_m+t_m} \oplus (\mathrm{finite}) \\
&\cong \begin{cases}
(\Q_p/\Z_p)^{1+t_m}, & \text{if } m \text{ is odd} \\
(\Q_p/\Z_p)^{t_m}, & \text{if } m \text{ is even}
\end{cases}
\end{align*}
    and
    \[
    H^2(G, \Q_p/\Z_p(m))\cong (\Q_p/\Z_p)^{t_m}.
    \]
\end{cor}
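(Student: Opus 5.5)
The corollary is assembled from the computations just above it, so the plan is to chain them in order.

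First, Corollary \ref{cor: H_dual} gives the Pontryagin-dual descriptions:
\[
H^1(G,\Q_p/\Z_p(m))^\vee\simeq M_m/f_mM_m,
\qquad
H^2(G,\Q_p/\Z_p(m))^\vee\simeq M_m[f_m],
\qquad
M_m=e_mX_S .
\]
Since Pontryagin duality is an exact anti-equivalence, it suffices to show three things: $M_m/f_mM_m\cong\Z_p^{r_m+t_m}\oplus(\text{finite})$, $M_m[f_m]\cong\Z_p^{t_m}$, and the rank formula \eqref{eqn: r_m} for $r_m$. Dualizing then uses $(\Z_p)^\vee\cong\Q_p/\Z_p$ and the fact that a finite group is self-dual up to isomorphism.

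Second, for the two $\Lambda$-module statements I would use the structure theorem. Remark \ref{rmk: X_S} says $X_S$ has no nontrivial finite submodule, so the pseudo-isomorphism $M_m\to E_m$ is injective with finite cokernel $C_m$. The snake lemma for multiplication by $f_m$ then gives the six-term sequence \eqref{eqn: 6_ES}. Next I would compute $E_m/f_mE_m$ and $E_m[f_m]$ summand by summand. The inputs are $\Lambda/(f_m)\cong\Z_p$ (as $f_m$ is a degree-one distinguished polynomial) and unique factorization in $\Z_p[[T]]$. This yields $E_m/f_mE_m\cong\Z_p^{r_m+t_m}\oplus(\text{finite})$ and $E_m[f_m]\cong\Z_p^{t_m}$, where $t_m$ is the number of elementary summands $\Lambda/f_m^{m_k}$.

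Third, I would transfer these to $M_m$. Since $C_m$ is finite, both $C_m[f_m]$ and $C_m/f_mC_m$ are finite. Tensoring \eqref{eqn: 6_ES} with $\Q_p$ matches the $\Z_p$-ranks, and $M_m/f_mM_m$ is finitely generated over $\Z_p$ because $M_m$ is finitely generated over $\Lambda$. For the torsion part, $M_m[f_m]$ injects into $E_m[f_m]\cong\Z_p^{t_m}$ with finite cokernel, being contained in $C_m[f_m]$. Any nonzero submodule of $\Z_p^{t_m}$ is free, so $M_m[f_m]\cong\Z_p^{t_m}$. This yields $H^2(G,\Q_p/\Z_p(m))\cong(\Q_p/\Z_p)^{t_m}$ with no finite summand.

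Finally, I would substitute the rank $r_m$ from \cite[Proposition 11.4.5]{nsw}, as recorded in \eqref{eqn: r_m}: it is $1$ for $m$ odd and $0$ for $m$ even. This splits the $H^1$ formula into the two displayed cases, where the finite part is understood to be carried along implicitly in the case display.

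The main obstacle is the branch-level rank input. \cite{nsw} states the rank of $X_S$ over $\Lambda(\Gamma)$ as $r_2=\frac{p-1}{2}$, and one must check that this rank is distributed as exactly one copy of $\Lambda$ on each odd branch $e_mX_S$ and none on the even branches. I would verify this by comparing with the Euler characteristic $\sum(-1)^i\operatorname{rank}_{\Z_p}H^i(G_{F_1,S},\Z_p(\omega^{j}))$ branchwise. The underlying point is that complex conjugation acts by $-1$ exactly on the odd characters, so by the local-global Euler characteristic formula only odd branches acquire a free rank.

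A secondary point is bookkeeping the sign conventions: the passage $e_{-m}\leftrightarrow e_m$ under duality in Lemma \ref{lem: branch_dual_commute}, and the identification of $f_m$ with $T-(u^{-m}-1)$. These should be routine given the earlier lemmas.
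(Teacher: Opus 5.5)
Your proposal is correct and is essentially the paper's own argument. Both go through Corollary~\ref{cor: H_dual}, the injective pseudo-isomorphism $M_m\hookrightarrow E_m$ with finite cokernel $C_m$, the snake-lemma sequence \eqref{eqn: 6_ES}, the summand-by-summand computation of $E_m/f_mE_m$ and $E_m[f_m]$, and the transfer to $M_m$ via $-\otimes_{\Z_p}\Q_p$ and the finite-index argument, ending with $r_m$ from \cite[Proposition 11.4.5]{nsw}; your Euler-characteristic check of $r_m$ is an optional supplement that the paper replaces by this citation.

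Reading the finite summand as carried along in the case display is also right: both arguments determine $H^1(G,\Q_p/\Z_p(m))$ only up to a finite summand, and that summand can be nonzero. For example, with $p=691$, $m=12$ one has $r_m=t_m=0$, yet $H^1(G,\Q_p/\Z_p(m))\cong H^2(G,\Z_p(m))\neq 0$ because $691\mid B_{12}$.
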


According to \cite{nsw}, we already have $H^2(G, \Q_p/\Z_p(m))=0$ for $m\geq 0$; hence $t_m=0$ for $m\geq 0$.

\begin{rmk}
    By the Iwasawa main conjecture(see \cite[Theorem 11.6.8]{nsw}), $(G_{k, p}(T))=(F_{X_S}(T))$.
    Here $F_{X_S}(T)=\prod_{j}F_j^{n_j}(T)\prod_k f_m^{m_k}(T)$. Note that $f_m(T)$ are degree 1 with zero at $T=u^{-m}-1$ and $F_j$ do not have zeros on $T=u^{-m}-1$. Thus 
    \[
    ord_{T=u^{-m}-1} G_{k, p}(T)
    =ord_{T=u^{-m}-1} F_{X_S}(T)
    =ord_{T=u^{-m}-1} \prod_{k=1}^{t_m} f_m^{m_k}(T)
    =\sum_{k=1}^{t_m}m_k.
    \]
    $t_m=0$ (i.e. $H^2(G, \Q_p/\Z_p(m))=0$) is equivalent to $ord_{T=u^{-m}-1} G_{k, p}(T)=0$, i.e., $G_{k, p}(u^{-m}-1)\neq 0$. But, $G_{k, p}(u^{-m}-1)=(q^{-m}-1)\zeta_{k, p}(m+1)$ since $m\neq 0$. Therefore $t_m=0$ if and only if $\zeta_{k, p}(m+1)\neq 0$.
\end{rmk}

Because the $e_m$-branch depends on $m$ modulo $p-1$, we can write $E_m$ for $0<m<p-1$:
\[
E_m=\Lambda^{r_m}\oplus\bigoplus_{i} \Lambda/p^{l_i}\oplus\bigoplus_{j} \Lambda/F_j^{n_j}\oplus
\bigoplus_{\substack{m'\equiv m\\\pmod{p-1}}}\bigoplus_{k=1}^{t_{m'}} \Lambda/f_{m'}^{m'_k}
\]
But $E_m$ is finitely generated $\Lambda$-module, so 
\[
\sum_{\substack{m'\equiv m\\\pmod{p-1}}}t_{m'}
\]
must be finite.
This means, $t_m=0$ for all but finitely many $m$.

\begin{cor}
    First suppose that $m\nequiv 0\pmod{p-1}$. Then, for all but finitely many $m$, the following hold:
    \[
    \mathrm{corank} \,H^1(G, \Q_p/\Z_p(m))=r_m=\begin{cases}
        1, & \text{if } m \text{ is odd} \\
        0, & \text{if } m \text{ is even} \\
    \end{cases}
    \]
    and
    \[
    H^2(G, \Q_p/\Z_p(m))=0.
    \]
\end{cor}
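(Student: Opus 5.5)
The plan is to combine the preceding corollary with the finiteness observation made just before the statement. For $m\nequiv 0\pmod{p-1}$ the preceding corollary gives
\[
H^1(G,\Q_p/\Z_p(m))\cong(\Q_p/\Z_p)^{r_m+t_m}\oplus(\mathrm{finite}),\qquad H^2(G,\Q_p/\Z_p(m))\cong(\Q_p/\Z_p)^{t_m}.
\]
It therefore suffices to prove that $t_m=0$ for all but finitely many such $m$. Once this is known, the corank of $H^1$ is $r_m$, which is computed by \eqref{eqn: r_m}, and $H^2$ vanishes.

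To show $t_m=0$ outside a finite set, I will fix a residue class $c\in\Z/(p-1)\Z$ with $c\neq 0$; there are only $p-2$ such classes. All integers $m'\equiv c$ share the same branch module $M_c=e_cX_S$, because the branch depends only on the residue mod $p-1$. They also share the same elementary module $E_c$, since $M_c$ has no nontrivial finite submodule (Remark \ref{rmk: X_S}) and so embeds in an elementary module with finite cokernel.

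The key point is that the linear factors $f_{m'}=T-(u^{-m'}-1)$ are pairwise non-associate for distinct $m'$. Indeed $u\in 1+p\Z_p$ is a topological generator, so $u$ is not a root of unity, and hence $u^{-m'}\ne u^{-m''}$ whenever $m'\ne m''$. Each $f_{m'}$ is an irreducible Weierstrass polynomial of degree 1. So the summands $\Lambda/f_{m'}^{m'_k}$ coming from different $m'$ are distinct elementary summands of the single module $E_c$.

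The number of such summands is bounded by the finite length of the torsion decomposition of $E_c$; more precisely, $\sum_{m'\equiv c}t_{m'}\le\deg$ of the characteristic polynomial of $M_c$. Hence $t_{m'}=0$ for all but finitely many $m'\equiv c$. Taking the union over the finitely many classes $c$ leaves only finitely many exceptional $m$.

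The main obstacle is to make precise that a single elementary decomposition $E_c$ captures all the $t_{m'}$ simultaneously. I would argue this directly by localizing the fixed module $E_c$ at each prime $(f_{m'})$: the number of cyclic $f_{m'}$-primary summands is an invariant of $M_c$, by the uniqueness part of the structure theorem. Equivalently, $t_{m'}=\mathrm{rank}_{\Z_p}M_c[f_{m'}]$ is intrinsic to $M_c$, and each nonzero $t_{m'}$ forces $f_{m'}$ to divide the characteristic polynomial, which has finite degree. This removes the apparent dependence of $E_m$ on $m$ in the earlier discussion.
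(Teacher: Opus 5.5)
Your proposal is correct and follows the paper's own argument. You fix a residue class mod $p-1$, note that all the $f_{m'}$-primary summands for $m'$ in that class sit inside the single finitely generated elementary module $E_c$, conclude that only finitely many $t_{m'}$ are nonzero, and then feed $t_m=0$ into the preceding corollary. Your check that the $f_{m'}$ are pairwise distinct primes (because $u$ is not a root of unity) and your bound of $\sum_{m'\equiv c}t_{m'}$ by the degree of the characteristic polynomial of the torsion part make explicit a step the paper only asserts via ``$E_m$ is finitely generated.''
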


\bibliographystyle{alpha}

\url{https://sites.google.com/view/seunghunryu/home}

\medskip

\url{https://sites.google.com/view/taewan-kim}

\end{document}